\documentclass[11pt]{article}
\usepackage{amsfonts,amsmath,bm, xspace}
\usepackage{multirow,graphicx, algorithm,algorithmic,rotating}
\usepackage{url}
\usepackage[round]{natbib}
\usepackage{fullpage}
\usepackage[small,bf]{caption}
\setlength{\captionmargin}{30pt}

\newtheorem{theorem}{Theorem}[section]
\newtheorem{lemma}[theorem]{Lemma}

\newtheorem{proposition}[theorem]{Proposition}

\newtheorem{conjecture}[theorem]{Conjecture}

\newtheorem{example}[subsection]{Example}

\def \endprf{\hfill {\vrule height6pt width6pt depth0pt}\medskip}
\newenvironment{proof}{\noindent {\bf Proof} }{\endprf\par}

\numberwithin{equation}{section}


\newcommand{\eat}[1]{}

\newcommand{\vct}[1]{\bm{#1}}
\newcommand{\mtx}[1]{\bm{#1}}

\newcommand{\wo}{\mathrm{wo}}
\renewcommand{\wr}{\mathrm{wr}}

\newcommand{\R}{\mathbb{R}}
\newcommand{\E}{\operatorname{\mathbb{E}}}

\newcommand{\diag}{\operatorname{diag}}
\newcommand{\trace}{\operatorname{trace}}

\newcommand{\minimize}{\mbox{minimize}}

\newcommand{\norm}[1]{\|#1\|}
\newcommand{\setof}[2]{\{ #1 \mid #2 \}}

\newcommand{\eq}[1]{(\ref{eq:#1})}
\newcommand{\nn}{\nonumber}

\title{Beneath the valley of the noncommutative arithmetic-geometric mean inequality: conjectures, case-studies, and consequences}

\author{Benjamin Recht and Christopher R\'{e}\\
Computer Sciences Department, University of Wisconsin-Madison\\
1210 W Dayton St, Madison, WI 53706}

\date{February 2012}

\begin{document}

\maketitle

\vspace{-0.3in}

\begin{abstract}
Randomized algorithms that base iteration-level decisions on samples from some pool are ubiquitous in machine learning and optimization. Examples include stochastic gradient descent and randomized coordinate descent. This paper makes progress at theoretically evaluating the difference in performance between sampling with- and without-replacement in such algorithms.  Focusing on least means squares optimization, we formulate a \emph{noncommutative arithmetic-geometric mean inequality} that would prove that the expected convergence rate of without-replacement sampling is faster than that of with-replacement sampling.  We demonstrate that this inequality holds for many classes of random matrices and for some pathological examples as well.  We provide a deterministic worst-case bound on the gap between the discrepancy between the two sampling models, and explore some of the impediments to proving this inequality in full generality.   We detail the consequences of this inequality for stochastic gradient descent and the randomized Kaczmarz algorithm for solving linear systems.
\end{abstract}

{\bf Keywords.} Positive definite matrices. Matrix Inequalities. Randomized algorithms. Random matrices. Optimization. Stochastic gradient descent.

\section{Introduction}

Randomized sequential algorithms abound in machine learning and optimization.  The most famous is the stochastic gradient method~\citep[see][]{Bottou98,BertsekasIncrementalSurvey,Nemirovski09,Shalev-Shwartz08}, but other popular methods include algorithms for alternating projections~\citep[see][]{Strohmer09,Leventhal10}, proximal point methods~\citep[see][]{Bertsekas11}, coordinate descent~\citep[see][]{Nesterov10} and derivative free optimization~\citep[see][]{Nesterov11,NemirovskiYudinBook}.  In all of these cases, an iterative procedure is derived where, at each iteration, an \emph{independent} sample from some distribution determines the action at the next stage.  This sample is selected \emph{with-replacement} from a pool of possible options.


In implementations of many of these methods, however, practitioners often choose to break the independence assumption.  For instance, in stochastic gradient descent, many implementations pass through each item exactly once in a random order (i.e., according to a random permutation).  In randomized coordinate descent, one can cycle over the coordinates in a random order.  These strategies, employing \emph{without-replacement} sampling, are often easier to implement efficiently, guarantee that every item in the data set is touched at least once, and often have better empirical performance than their with-replacement counterparts~\citep[see][]{Bottou09,RechtRe11,RechtReSIGMOD12}.

Unfortunately, the analysis of without-replacement sampling schemes are quite difficult.  The independence assumption underlying with-replacement sampling provides an elegant Markovian framework for analyzing incremental algorithms.  The iterates in without-replacement sampling are correlated, and studying them requires sophisticated probabilistic tools.  Consequently, most of the analyses without-replacement optimization assume that the iterations are assigned deterministically. Such deterministic orders might incur exponentially worse convergence rates than randomized methods~\citep{Nedic00}, and deterministic orders still require careful estimation of accumulated errors~\citep[see][]{Luo91,Tseng98}.  The goal of this paper is to make progress towards patching the discrepancy between theory and practice of without-replacement sampling in randomized algorithms.

In particular, in many cases, we  demonstrate that without-replacement sampling outperforms with-replacement sampling provided a \emph{noncommutative} version of the arithmetic-geometric mean inequality holds.  Namely, if $\mtx{A}_1,\ldots,\mtx{A}_n$ are a collection of $d\times d$ positive semidefinite matrices, we define the arithmetic and (symmetrized) geometric means to be
\begin{equation*}\label{eq:gm-def}
\mtx{M}_A:=\frac{1}{n}\sum_{i=1}^n \mtx{A}_i\,,~~~~\text{and}~~~~\mtx{M}_G:=\frac{1}{n!} \sum_{\sigma \in S_n} \mtx{A}_{\sigma(1)} \times \cdots \times \mtx{A}_{\sigma(n)}
\end{equation*}
where $S_n$ denotes the group of permutations.  Our conjecture is that the norm of $\mtx{M}_G$ is always less than the norm of $(\mtx{M}_A)^n$.  Assuming this inequality, we show that without-replacement sampling leads to faster convergence for both the least mean squares  and randomized Kaczmarz algorithms of~\citet{Strohmer09}.  

Using established work in matrix analysis, we show that these noncommutative arithmetic-geometric mean inequalities hold when there are only two matrices in the pool.  We also prove that the inequality is true when all of the matrices commute.  We demonstrate that if we don't symmetrize, there are deterministically ordered products of $n$ matrices whose norm exceeds $\|\mtx{M}_A\|^n$ by an exponential factor.  That is, symmetrization is necessary for the noncommutative arithmetic-geometric mean inequality to hold.

While we are unable to prove the noncommutative arithmetic-geometric mean inequality in full generality, we verify that it holds for many classes of~\emph{random} matrices.
Random matrices are, in some sense, the most interesting case for machine learning applications.  This is particularly evident in applications such as empirical risk minimization and online learning where the the data are conventionally assumed to be generated by some i.i.d random process.  In Section~\ref{sec:random}, we show that if $\mtx{A}_1,\ldots, \mtx{A}_n$ are generated i.i.d. from certain distributions, then the noncommutative arithmetic-geometric mean inequality holds in expectation with respect to the $\mtx{A}_i$.  Section~\ref{sec:random-ncamgm} assumes that $\mtx{A}_i = \mtx{Z}_i \mtx{Z}_i^T$ w.b.here $\mtx{Z}_i$ have independent entries, identically sampled from some symmetric distribution.  In Section~\ref{sec:sgd-consequences}, we analyze the random matrices that commonly arise in stochastic gradient descent and related algorithms, again proving that without-replacement sampling exhibits faster convergence than with-replacement sampling. We close with a discussion of other open conjectures that could impact machine learning theory, algorithms, and software.

\section{Sampling in incremental gradient descent}\label{sec:examples}

To illustrate how with- and without-replacement sampling methods differ in randomized optimization algorithms, we focus on one core algorithm, the Incremental Gradient Method (IGM).   Recall that the IGM minimizes the function
\begin{equation}\label{eq:glob-prob}
	\underset{\vct{x}}{\minimize}~f(\vct{x}) = \sum_{i=1}^n f_i(\vct{x})
\end{equation}
via the iteration
\begin{equation}\label{eq:igm-def}
	\vct{x}_k = \vct{x}_{k-1} - \gamma_{k} \nabla f_{i_k}(\vct{x}_{k-1})\,.
\end{equation}
Here, $\vct{x}_0$ is an initial starting vector, $\gamma_k$ are a sequence of nonnegative step sizes, and the indices $i_k$ are chosen using some (possibly deterministic) sampling scheme.   When $f$ is strongly convex, the IGM iteration converges to a near-optimal solution of~\eq{glob-prob} for any $\vct{x}_0$ under a variety of step-sizes protocols and sampling schemes including constant and diminishing step-sizes~\citep[see][]{Antstreicher00,BertsekasIncrementalSurvey,Nemirovski09}.  When the increments are selected uniformly at random at each iteration, IGM is equivalent to stochastic gradient descent.  We use the term IGM here to emphasize that we are studying many possible orderings of the increments.  In the next examples, we study the specialized case where the $f_i$ are quadratic and the IGM is equivalent to the \emph{least mean squares} algorithm of~\citet{WidrowHoffLMS}.

\subsection{One-dimensional Examples}
First consider the following toy one-dimensional least-squares problem 
\begin{equation}\label{eq:1dls}
	\underset{x}{\minimize}~\frac{1}{2} \sum_{i=1}^n (x- y_i)^2\,.
\end{equation}
where $y_i$ is a sequence of scalars with mean $\mu_y$ and variance $\sigma^2$.
Applying~\eq{igm-def} to~\eq{1dls} results in the iteration.
\[
	x_k = x_{k-1} - \gamma_{k} (x_{k-1}-y_{i_k})\,
\]
If we initialize the method with $x_0=0$ and take $n$ steps of incremental gradient with stepsize $\gamma_k=1/k$, we have
\[
	x_n = \frac{1}{n}\sum_{j=1}^n y_{i_j}
\]
where ${i_j}$ is the index drawn at iteration $j$.   If the steps are chosen using a without-replacement sampling scheme, $x_n =\mu_y$, the global minimum.    On the other hand, using with-replacement sampling, we will have
\[
	\E[ (x_n-\mu_y)^2] = \frac{\sigma^2}{n}\,,
\]
which is a positive mean square error.

Another toy example that further illustrates the discrepancy is the least-squares problem
\begin{equation*}\label{eq:1dls2}
	\underset{x}{\minimize}~\frac{1}{2} \sum_{i=1}^n \beta_i (x- y)^2
\end{equation*}
where $\beta_i$ are positive weights.  Here, $y$ is a scalar, and the global minimum is clearly $y$.  Let's consider the incremental gradient method with constant stepsize $\gamma_k=\gamma<\min \beta_i^{-1}$. Then after $n$ iterations we will have
\[
	|x_n - y| = |y| \prod_{j=1}^n (1-\gamma \beta_{i_j}) 
\]
If we perform without-replacement sampling, this error is given by
\[
	|x_n - y| =  |y| \prod_{i=1}^n (1-\gamma \beta_{i})\,.
\]
On the other hand, using with-replacement sampling yields
\[
	\E[|x_n - y|]= |y| \left(1-\frac{\gamma}{n}\sum_{i=1}^n \beta_{i}\right)^n \,.
\]
By the arithmetic-geometric mean inequality, we then have that the without-replacement sample is always closer to the optimal value in expectation.  This sort of discrepancy is not simply a feature of these toy examples.  We now demonstrate that similar behavior arises in multi-dimensional examples.

\subsection{IGM in more than one dimension}\label{sec:igm}

Now consider IGM in higher dimensions.  Let $\vct{x}_\star$ be a vector in $\R^d$ and set
\[
	y_i = \vct{a}_i^T \vct{x}_\star+ \omega_i~~~~\mbox{for}~i=1,\ldots, n
\]
where $\vct{a}_i\in\R^d$ are some test vectors and $\omega_i$ are i.i.d. Gaussian random variables with mean zero and variance $\rho^2$.

We want to compare with- vs without-replacement sampling for IGD on the cost function
\begin{equation}\label{eq:lsopt}
	\underset{\vct{x}}{\minimize}~\sum_{i=1}^n (\vct{a}_i^T\vct{x} - y_i)^2\,.
\end{equation}

Suppose we walk over $k$ steps of IGD with constant stepsize $\gamma$ and we access the terms $i_1,\ldots, i_k$ in that order.  Then we have
\[
\begin{aligned}
	\vct{x}_k = \vct{x}_{k-1} - \gamma \vct{a}_{i_k}(\vct{a}_{i_k}^T \vct{x}_{i_{k-1}}-y_{i_k}) = \left(I- \gamma \vct{a}_{i_k}\vct{a}_{i_k}^T\right)\vct{x}_{i_{k-1}} +\gamma \vct{a}_{i_k} y_{i_k}\,.
\end{aligned}
\]
Subtracting $x_\star$ from both sides of this equation then gives 
\begin{align}
\label{eq:basic-recursion} 	\vct{x}_k -\vct{x}_\star &= \left(\mtx{I}- \gamma \vct{a}_{i_k}\vct{a}_{i_k}^T\right)(\vct{x}_{k-1} -\vct{x}_\star) + \gamma \vct{a}_{i_k} \omega_{i_k}\\
\nonumber	&= \prod_{j=1}^k \left(\mtx{I}- \gamma \vct{a}_{i_j}\vct{a}_{i_j}^T\right)(\vct{x}_{0} -\vct{x}_\star) + \sum_{\ell=1}^k \prod_{k\geq j > \ell} \left(\mtx{I}-\gamma \vct{a}_{i_j} \vct{a}_{i_j}^T\right) \gamma \vct{a}_{i_\ell} \omega_{i_\ell}\,.
\end{align}
Here, the product notation means we multiply by the matrix with smallest index first, then left multiply by the matrix with the next index and so on up to the largest index.

Our goal is to estimate the risk after $k$ steps, namely $\E[\|\vct{x}_k - \vct{x}_\star\|^2]$, and demonstrate that this error is smaller for the without-replacement model.   The expectation is with respect to the IGM ordering and the noise sequence $\omega_i$. To simplify things a bit, we take a partial expectation with respect to $\omega_i$:
\begin{equation}\label{eq:main-sgd-exp}
\small
\E[\|\vct{x}_k -\vct{x}_\star\|^2] = \E\left[\left\| \prod_{j=1}^k \left(\mtx{I}- \gamma \vct{a}_{i_j}\vct{a}_{i_j}^T\right)(\vct{x}_{0} -\vct{x}_\star) \right\|^2\right]+\rho^2 \gamma^2\sum_{\ell=1}^k \E\left[\left\|\prod_{k\geq j > \ell} \left(\mtx{I}-\gamma \vct{a}_{i_j} \vct{a}_{i_j}^T\right) \vct{a}_{i_\ell}\right\|^2\right]
\end{equation}
In this case, we need to compare the expected value of matrix products under with or without-replacement sampling schemes in order to conclude which is better.  Is there a simple conjecture, analogous to the arithmetic-geometric mean inequality, that would guarantee without-replacement sampling is always better? 

\subsection{The Randomized Kaczmarz algorithm}

As another high-dimensional example in the same spirit, we consider the randomized Kaczmarz algorithm of~\citet{Strohmer09}. The Kaczmarz algorithm is used to solve the over-determined linear system  $\mtx{\Phi} \vct{x} = \vct{y}$. Here $\mtx{\Phi}$ is an $n\times d$ matrix with $n>d$ and we assume there exists an exact solution $\mtx{x}_\star$ satisfying $\mtx{\Phi}\vct{x}_\star = \vct{y}$.  Kaczmarz's method solves this system by alternating projections~\citep{Kaczmarz37} and was implemented in the earliest medical scanning devices~\citep{Hounsfield73}.  In computer tomography, this method is called the \emph{Algebraic Reconstruction Technique}~\citep{Herman80, Natterer86} or \emph{Projection onto Convex Sets}~\citep{Sezan87}.  

Kaczmarz's algorithm consists of iterations of the form
\begin{equation}\label{eq:main-kacz-exp}
	\vct{x}_{k+1} = \vct{x}_k + \frac{y_i - \vct{\phi}_{i_k}^T \vct{x}_k}{\|\vct{\phi}_{i_k}\|^2} \vct{\phi_{i_k}}\,.
\end{equation}
where the rows of $\Phi$ are accessed in some deterministic order.  This sequence can be interpreted as an incremental variant of Newton's method on the least squares cost function
\[
	\underset{\vct{x}}{\minimize}~\sum_{i=1}^n (\vct{\phi}_{i}^T \vct{x}_k-y_i)^2
\]
with step size equal to $1$~\citep[see][]{BertsekasNLP}.

Establishing the convergence rate of this method proved difficult in imaging science.  On the other hand,~\citet{Strohmer09} proposed a randomized variant of the Kaczmarz method, choosing the next iterate with-replacement with probability proportional to the norm of $\vct{\phi}_i$.  Strohmer and Vershynin established linear convergence rates for their iterative scheme.  Expanding out~\eq{main-kacz-exp} for $k$ iterations, we see that
\[
	\vct{x}_k - \vct{x}_\star = \prod_{j=1}^k \left(\mtx{I} - \frac{\vct{\phi}_{i_j}\vct{\phi}_{i_j}^T}{\|\vct{\phi}_{i_j}\|^2} \right) (\vct{x}_0 - \vct{x}_\star)\,.
\]
Let us suppose that we modify Strohmer and Vershynin's procedure to employ without-replacement sampling. After $k$ steps is the with-replacement or without-replacement model closer to the optimal solution?

\section{Conjectures concerning the norm of geometric and arithmetic means of positive definite matrices}
\eat{
What are the necessary ingredients required to generalize the simple one-dimensional examples to higher dimensions?  In particular, what is the appropriate matrix inequality which would guarantee that without-replacement sampling is superior to with-replacement sampling in all of these iterative procedures?
}
To formulate a sufficient conjecture which would guarantee that without-replacement sampling outperforms with-replacement, let us first formalize some notation.  Throughout,
$[n]$ denotes the set of integers from $1$ to $n$. Let $\mathbb{D}$ be some domain, $f:\mathbb{D}^k \rightarrow \R$, and  $(x_1,\ldots,x_n)$ a set of $n$ elements from $\mathbb{D}$.  We define the without-replacement expectation as
\[
	\E_\wo[ f(x_{i_1},\ldots,x_{i_k})] = \tfrac{(n-k)!}{n!}\sum_{j_1\neq j_2 \neq \ldots \neq j_k}f(x_{j_1},\ldots,x_{j_k})\,.
\]
That is, we average the value of $f$ over all ordered tuples of elements from $(x_1,\ldots,x_n)$.   Similarly, the with-replacement expectation is defined as
\[
	\E_\wr[ f(x_{i_1},\ldots,x_{i_k})] = n^{-k} \sum_{(j_1,\ldots,j_k)=1}^n\ f(x_{j_1},\ldots,x_{j_k})\,.
\]

With these conventions, we can list our main conjectures as follows:

\begin{conjecture}[Operator Inequality of Noncommutative Arithmetic and Geometric Means]\label{conj:ncamgm}
	Let $\mtx{A}_1,\ldots,\mtx{A}_n$ be a collection of positive semidefinite matrices. Then we conjecture that the following two inequalities always hold:
	\begin{align}
\label{eq:bias-conj}		\left\| \E_{\wo}\left[ \prod_{j=1}^k \mtx{A}_{i_j} \right]\right\|&\leq
		\left\| \E_{\wr}\left[ \prod_{j=1}^k \mtx{A}_{i_j} \right]\right\|\\
		 \label{eq:weak-variance-conj}		\left\| \E_{\wo}\left[ \prod_{j=1}^k \mtx{A}_{i_{k-j+1}}\prod_{j=1}^k \mtx{A}_{i_j}\right] \right\| 
		 &\leq \left\| \E_{\wr}\left[ \prod_{j=1}^k \mtx{A}_{i_{k-j+1}}\prod_{j=1}^k \mtx{A}_{i_j}\right] \right\| 
		 \end{align}
\end{conjecture}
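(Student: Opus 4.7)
My plan is to induct on $k$, attacking both inequalities in parallel. For $k=1$ both reduce to equalities; for $k=2$, the identities
\begin{equation*}
\E_{\wo}[\mtx{A}_{i_1}\mtx{A}_{i_2}] = \frac{n}{n-1}\mtx{M}_A^2 - \frac{1}{n-1}\,\overline{\mtx{A}^2}, \qquad \E_{\wr}[\mtx{A}_{i_1}\mtx{A}_{i_2}] = \mtx{M}_A^2,
\end{equation*}
where $\overline{\mtx{A}^2} := \tfrac{1}{n}\sum_i \mtx{A}_i^2$, reduce \eq{bias-conj} to a two-matrix spectral-norm inequality of the type already invoked in the excerpt; an analogous $k=2$ expansion of $\mtx{A}_{i_2}\mtx{A}_{i_1}\mtx{A}_{i_1}\mtx{A}_{i_2}$ handles \eq{weak-variance-conj}. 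The engine of the induction for \eq{weak-variance-conj} is the outer-peeling identity
\begin{equation*}
\E_{\wo}\!\left[\prod_{j=1}^k \mtx{A}_{i_{k-j+1}}\prod_{j=1}^k \mtx{A}_{i_j}\right] = \frac{1}{n}\sum_{i=1}^n \mtx{A}_i\,\E_{\wo}^{[n]\setminus\{i\}}\!\left[\prod_{j=1}^{k-1} \mtx{A}_{i_{k-j}}\prod_{j=1}^{k-1}\mtx{A}_{i_j}\right]\mtx{A}_i,
\end{equation*}
together with its with-replacement analog; applying the inductive hypothesis to each inner term over the reduced pool $[n]\setminus\{i\}$ transfers the problem to bounding the $O(1/n)$ perturbation of $\mtx{M}_A$ caused by excluding one matrix. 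A parallel peeling on the first factor handles \eq{bias-conj}.

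Alongside the induction I would pursue a swap/coupling argument: interpolate between the with-replacement measure on $[n]^k$ and the without-replacement measure on ordered $k$-tuples of distinct indices by a sequence of elementary moves (such as resampling one coordinate conditioned on distinctness of the others, or adjacent transpositions acting on positions), and try to show that each such move weakly increases the operator norm of the corresponding expected product. The central candidate lemma is a noncommutative rearrangement inequality in the spirit of
\begin{equation*}
\tfrac{1}{2}\|\mtx{P}(\mtx{A}\mtx{Q}\mtx{B} + \mtx{B}\mtx{Q}\mtx{A})\mtx{R}\| \leq \tfrac{1}{2}\|\mtx{P}\mtx{A}\mtx{Q}\mtx{A}\mtx{R}\| + \tfrac{1}{2}\|\mtx{P}\mtx{B}\mtx{Q}\mtx{B}\mtx{R}\|
\end{equation*}
for PSD matrices $\mtx{P},\mtx{Q},\mtx{R},\mtx{A},\mtx{B}$, which would propagate along longer products by telescoping.

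The principal obstacle is non-commutativity itself. Every induction step or local swap introduces commutators $[\mtx{A}_i,\mtx{A}_j]$ that couple nontrivially to the surrounding factors, and, as the excerpt already emphasizes, full $S_n$-symmetrization is required since deterministic orderings can fail by an exponential factor. A purely local pairwise-swap argument is therefore unlikely to close; I expect that the ultimate proof will require a representation-theoretic decomposition of the symmetrized product under the regular action of $S_n$, reducing the conjecture to a majorization or Schur-positivity statement on the character coefficients, at which point \eq{bias-conj} and \eq{weak-variance-conj} should become combinatorial claims about the irreducible components of the permutation module.
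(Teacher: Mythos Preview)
The statement you are attempting to prove is a \emph{conjecture}: the paper explicitly leaves it open and only verifies it in special cases ($n=2$ in Proposition~\ref{prop:two-matrices}, commuting matrices in Section~\ref{sec:commutative}, certain random ensembles in Section~\ref{sec:random}). There is no proof in the paper to compare your proposal against.

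Your proposal is not a proof but a sketch of several possible attack lines, and you yourself concede the main obstructions. Concretely, the peeling induction fails at the recombination step: the inductive hypothesis applied to the pool $[n]\setminus\{i\}$ produces a bound in terms of $\bigl(\tfrac{1}{n-1}\sum_{j\neq i}\mtx{A}_j\bigr)^{k-1}$, not $\mtx{M}_A^{k-1}$, and the ``$O(1/n)$ perturbation'' you mention is exactly where all the difficulty lives --- summing $\tfrac{1}{n}\sum_i \mtx{A}_i(\cdot)\mtx{A}_i$ against $n$ different shifted means does not telescope back to the with-replacement quantity without a further inequality that is at least as hard as the conjecture itself. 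The candidate rearrangement lemma $\tfrac12\|\mtx{P}(\mtx{A}\mtx{Q}\mtx{B}+\mtx{B}\mtx{Q}\mtx{A})\mtx{R}\|\le \tfrac12\|\mtx{P}\mtx{A}\mtx{Q}\mtx{A}\mtx{R}\|+\tfrac12\|\mtx{P}\mtx{B}\mtx{Q}\mtx{B}\mtx{R}\|$ is not established and, more importantly, a local two-term swap cannot suffice: the harmonic-frame construction in Section~3.3 of the paper shows that individual orderings can exceed $\|\mtx{M}_A\|^k$ by a factor exponential in $k$, so any argument must exploit cancellation across the \emph{entire} $S_n$-orbit simultaneously, not pairwise. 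Your closing remark about a representation-theoretic decomposition is in the right spirit, but it is a hope rather than a step; nothing in the proposal supplies the majorization or Schur-positivity statement that would be needed.

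In short: there is no proof to match, and your outline does not close the gap that the paper itself identifies as open.
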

\noindent Note that in~\eq{bias-conj}, we  have $\E_{\wr}[ \prod_j \mtx{A}_{i_j}] = (\tfrac{1}{n}\sum_i \mtx{A}_i)^k=(\mtx{M}_A)^k$.

Assuming this conjecture holds, let us return to the analysis of the IGM~\eq{main-sgd-exp}. Assuming that $\vct{x}_0-\vct{x}_\star$ is an arbitrary starting vector and that \eq{weak-variance-conj} holds, we have that each term in this summation is smaller for the without-replacement sampling model than for the with-replacement sampling model.  In turn, we expect the without-replacement sampling implementation will return lower risk after one pass over the data-set.  Similarly, for the randomized Kaczmarz iteration~\eq{main-kacz-exp}, Conjecture~\ref{conj:ncamgm} implies that a without-replacement sample will have lower error after $k<n$ iterations.

In the remainder of this document we provide several case studies illustrating that these noncommutative variants of the arithmetic-geometric mean inequality hold in a variety of settings, establishing along the way tools and techniques that may be useful for proving Conjecture~\ref{conj:ncamgm} in full generality.

\subsection{Two matrices and a search for the geometric mean}\label{sec:two-matrices}

Both of the inequalities~\eq{bias-conj} and~\eq{weak-variance-conj} are true when $n=2$.  These inequalities all follow from an well-estabilished line of research in estimating the norms of products of matrices, started by the seminal work of~\citet{Bhatia90}.  

\begin{proposition}\label{prop:two-matrices} Both~\eq{bias-conj} and~\eq{weak-variance-conj} hold when $n=2$
\end{proposition}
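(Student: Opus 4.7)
The plan is to reduce the proposition to the case $k=2$ and then invoke the classical Bhatia-Kittaneh inequality, $\|\mtx{A}\mtx{B}\| \le \tfrac14 \|\mtx{A}+\mtx{B}\|^2$, valid for all positive semidefinite $\mtx{A}$ and $\mtx{B}$. When $k=1$ both $\E_\wo$ and $\E_\wr$ return $\tfrac12(\mtx{A}_1+\mtx{A}_2)$, so each inequality is an equality; I therefore only need to treat $k=2$.

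For \eq{bias-conj} with $n=k=2$, I would first expand
\begin{equation*}
\E_\wo[\mtx{A}_{i_2}\mtx{A}_{i_1}] = \tfrac12(\mtx{A}_1\mtx{A}_2+\mtx{A}_2\mtx{A}_1), \qquad \E_\wr[\mtx{A}_{i_2}\mtx{A}_{i_1}] = \tfrac14(\mtx{A}_1+\mtx{A}_2)^2,
\end{equation*}
then bound the without-replacement side via the triangle inequality and the identity $\|\mtx{A}_2\mtx{A}_1\| = \|(\mtx{A}_1\mtx{A}_2)^\ast\| = \|\mtx{A}_1\mtx{A}_2\|$ to obtain $\|\E_\wo\| \le \|\mtx{A}_1\mtx{A}_2\|$. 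Bhatia-Kittaneh then supplies $\|\mtx{A}_1\mtx{A}_2\| \le \tfrac14\|\mtx{A}_1+\mtx{A}_2\|^2 = \|\E_\wr\|$.

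For \eq{weak-variance-conj} with $n=k=2$, letting $\mtx{X}=\mtx{A}_{i_2}\mtx{A}_{i_1}$ so the matrix being averaged is $\mtx{X}^\ast\mtx{X}$, I would upper-bound the without-replacement side by the triangle inequality,
\begin{equation*}
\|\E_\wo[\mtx{X}^\ast\mtx{X}]\| = \bigl\|\tfrac12(\mtx{A}_1\mtx{A}_2^2\mtx{A}_1 + \mtx{A}_2\mtx{A}_1^2\mtx{A}_2)\bigr\| \le \|\mtx{A}_1\mtx{A}_2\|^2,
\end{equation*}
using that each summand has the form $\mtx{Y}^\ast\mtx{Y}$ with $\|\mtx{Y}\|=\|\mtx{A}_1\mtx{A}_2\|$. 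For the with-replacement side, I would invoke the operator Jensen-type fact $\E[\mtx{X}^\ast\mtx{X}] \succeq (\E\mtx{X})^\ast(\E\mtx{X})$ (a consequence of the positivity of $\E[(\mtx{X}-\E\mtx{X})^\ast(\mtx{X}-\E\mtx{X})]$) together with $\E_\wr\mtx{X} = \tfrac14(\mtx{A}_1+\mtx{A}_2)^2$, to conclude $\|\E_\wr[\mtx{X}^\ast\mtx{X}]\| \ge \tfrac1{16}\|\mtx{A}_1+\mtx{A}_2\|^4$. Squaring Bhatia-Kittaneh yields $\|\mtx{A}_1\mtx{A}_2\|^2 \le \tfrac1{16}\|\mtx{A}_1+\mtx{A}_2\|^4$, closing the loop.

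The main thing to watch is matching the slack incurred on the two sides: the triangle-inequality upper bound on $\|\E_\wo[\mtx{X}^\ast\mtx{X}]\|$ is loose when $\mtx{A}_1$ and $\mtx{A}_2$ fail to commute, and so is the Jensen-style lower bound on $\|\E_\wr[\mtx{X}^\ast\mtx{X}]\|$, but both losses are absorbed by the single Bhatia-Kittaneh step. Extending this bookkeeping past $n=2$ is where the real difficulty lies, since one Bhatia-Kittaneh application no longer reaches far enough and the intermediate products no longer factor as $\mtx{Y}^\ast\mtx{Y}$ for a single $\mtx{Y}$.
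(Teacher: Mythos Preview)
Your argument is correct and parallels the paper's proof almost exactly for \eq{bias-conj} and for the without-replacement upper bound in \eq{weak-variance-conj}.  The one genuine difference is how you lower-bound the with-replacement side in \eq{weak-variance-conj}.  The paper establishes the same operator inequality
\[
\left(\tfrac12\mtx{A}+\tfrac12\mtx{B}\right)^4 \;\preceq\; \tfrac14\mtx{A}^4+\tfrac14\mtx{A}\mtx{B}^2\mtx{A}+\tfrac14\mtx{B}\mtx{A}^2\mtx{B}+\tfrac14\mtx{B}^4
\]
by exhibiting an explicit sum-of-Hermitian-squares decomposition of the difference (found with the aid of NCSOSTools), whereas you recover the same inequality in one line from the operator-variance identity $\E_\wr[\mtx{X}^\ast\mtx{X}]-(\E_\wr\mtx{X})^\ast(\E_\wr\mtx{X})=\E_\wr[(\mtx{X}-\E_\wr\mtx{X})^\ast(\mtx{X}-\E_\wr\mtx{X})]\succeq 0$ with $\mtx{X}=\mtx{A}_{i_2}\mtx{A}_{i_1}$.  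Your route is shorter and more conceptual, and it makes transparent that the semidefinite ordering step is just ``second moment dominates first moment squared''; the paper's SOS certificate, on the other hand, gives the explicit coefficient matrix $Q$, which could in principle be useful if one wanted to quantify the gap.  Both arguments rest on the same Bhatia--Kittaneh inequality $\|\mtx{A}\mtx{B}\|\le\|\tfrac12\mtx{A}+\tfrac12\mtx{B}\|^2$ as the only nontrivial external ingredient.
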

\begin{proof}  Let $\mtx{A}$ and $\mtx{B}$ be positive definite matrices.  Both of our arithmetic-geometric mean inequalities follow from the stronger inequality
\begin{equation}\label{eq:bhatia1}
	\left\| \mtx{A}\mtx{B} \right\| \leq  \left\| \tfrac{1}{2}\mtx{A} +\tfrac{1}{2} \mtx{B} \right\|^2\,.
\end{equation}	
This bound was proven by~\cite{Bhatia00}.  In particular, since
\[
	\|\tfrac{1}{2}\mtx{A}\mtx{B} + \tfrac{1}{2}\mtx{B}\mtx{A}\| \leq \left\| \mtx{A}\mtx{B} \right\|\,,
\]
\eq{bias-conj} is immediate.  For~\eq{weak-variance-conj}, note that
\begin{equation}\label{eq:two-mat-wo}
	\E_{\wo}[ \mtx{A}_{i_1}\mtx{A}_{i_2}^2 \mtx{A}_{i_1}] =  \tfrac{1}{2}\mtx{A}\mtx{B}^2\mtx{A}+\tfrac{1}{2}\mtx{B}\mtx{A}^2\mtx{B}
\end{equation}
and
\[
	\E_{\wr}[ \mtx{A}_{i_1}\mtx{A}_{i_2}^2 \mtx{A}_{i_1}] =  \tfrac{1}{4} \mtx{A}^4 + \tfrac{1}{4}\mtx{A}\mtx{B}^2\mtx{A}+\tfrac{1}{4}\mtx{B}\mtx{A}^2\mtx{B}+\tfrac{1}{4}\mtx{B}^4\,.
\]

We can bound~\eq{two-mat-wo} by
\begin{align*}
\tfrac{1}{2}\|\mtx{A}\mtx{B}^2\mtx{A} + \mtx{B}\mtx{A}^2\mtx{B}\| \leq	\|\mtx{A}\mtx{B}^2\mtx{A}\|
=\|\mtx{A}\mtx{B}\|^2 \leq \| \tfrac{1}{2}\mtx{A}+\tfrac{1}{2}\mtx{B}\|^4= \| (\tfrac{1}{2}\mtx{A}+\tfrac{1}{2}\mtx{B})^4\|\,.
\end{align*}
Here, the first inequality is the triangle inequality and the subsequent equality follows because the norm of $\mtx{X}^T\mtx{X}$ is equal to the squared norm of $\mtx{X}$.  The second inequality is~\eq{bhatia1}.

To complete the proof we show 
\[
\mtx{X}_L:=(\tfrac{1}{2}\mtx{A}+\tfrac{1}{2}\mtx{B})^4 \preceq \tfrac{1}{4} \mtx{A}^4 + \tfrac{1}{4}\mtx{A}\mtx{B}^2\mtx{A}+\tfrac{1}{4}\mtx{B}\mtx{A}^2\mtx{B}+\tfrac{1}{4}\mtx{B}^4:=\mtx{X}_R
\]
in the semidefinite ordering.  But this follows by observing
\begin{align}\label{eq:ncsos}
	\mtx{X}_R-\mtx{X}_L 
= \sum_{ p\in [2]^2}\sum_{q\in [2]^2 } Q(p,q) \mtx{A}_{p(2)}\mtx{A}_{p(1)} \mtx{A}_{q(1)} \mtx{A}_{q(2)}
\end{align}
where $Q(p,q) = 3/16$ if $p=q$ and $-1/16$ otherwise.  Since $p$ and $q$ both take $4$ possible values, the matrix $Q$ is positive definite which means that $\mtx{X}_R-\mtx{X}_L$ can be written as a nonnegative sum of products of the form $\mtx{Y}\mtx{Y}^T$.  We conclude that $\mtx{X}_R-\mtx{X}_L$ must be positive define and hence $\|\mtx{X}_R\| \geq \|\mtx{X}_L\|$, completing the proof\footnote{An explicit decomposition~\eq{ncsos} into Hermitian squares was initially found using the software NCSOSTools by~\citet{NCSOSTools}.  This software finds decompositions of matrix polynomials into sums of Hermitian squares.  Our argument was constructed after discovering this decomposition.}.
\end{proof}

Note that this proposition actually verifies a stronger statement: for two matrices, the arithmetic-geometric mean inequality holds for \emph{deterministic} orderings of two matrices.  We will discuss below how symmetrization is necessary for more than two matrices.  In fact, considerably stronger inequalities hold for symmetrized products of two matrices. As a striking example, the symmetrized geometric mean actually precedes the square of the arithmetic mean in the positive definite order.  Let $\mtx{A}$ and $\mtx{B}$ be positive semidefinite.  Then we have
\[
 \left(\tfrac{1}{2} \mtx{A} + \tfrac{1}{2}\mtx{B}\right)^2 - \left(\tfrac{1}{2} \mtx{A} \mtx{B} + \tfrac{1}{2} \mtx{B} \mtx{A}\right) =  \tfrac{1}{4} \mtx{A}^2 + \tfrac{1}{4}\mtx{B}^2 - \tfrac{1}{4} \mtx{A} \mtx{B} -\tfrac{1}{4} \mtx{B} \mtx{A}=  \left(\tfrac{1}{2} \mtx{A} - \tfrac{1}{2}\mtx{B}\right)^2\succeq 0\,.
\]
This ordering breaks for $3$ matrices as evinced by the counterexample
\[
	\mtx{A}_1 = \left[\begin{array}{cc} 7 & 0\\ 0 & 0 \end{array}\right]\,,~~~
	\mtx{A}_2 = \left[\begin{array}{cc} 1 & 1\\ 1 & 1 \end{array}\right]\,,~~~
	\mtx{A}_3 = \left[\begin{array}{cc} 1 & 1\\ 1 & 1 \end{array}\right]\,.
\]
The interested reader should consult~\citet{Bhatia08} for a comprehensive list of  inequalities concerning pairs of positive semidefinite matrices.

Unfortunately, these techniques are specialized to the case of two matrices, and no proof currently exists for the inequalities when $n\geq 3$.   There have been a varied set of attempts to extend the noncommutative arithmetic-geometric mean inequalities to more than two matrices. Much of the work in this space has focused on how to properly define the geometric mean of a collection of positive semidefinite matrices.  For instance,~\citet{Ando04} demarcate a list of properties desirable by any geometric mean, with one of the properties being that the geometric mean must precede the arithmetic mean in the positive-definite ordering.  Ando~\emph{et al} derive a geometric mean satisfying all of these properties, but the resulting mean in no way resembles the means of matrices discussed in this paper.  Instead, their geometric mean is defined as a fixed point of a nonlinear map on matrix tuples.  ~\citet{Bhatia06} and~\citet{Bonnabel09} propose geometric means based on geodesic flows on the Riemannian manifold of positive definite matrices, however these means also do not correspond to the averaged matrix products that we study in this paper.

\subsection{When is it not necessary to symmetrize the order?}\label{sec:commutative}

When the matrices commute, Conjecture~\ref{conj:ncamgm} is a consequence of the standard arithmetic-geometric mean inequality (more precisely, a consequence of Maclaurin's inequalities).   

\begin{theorem}[Maclaurin's Inequalities]
	Let $x_1,\ldots, x_n$ be positive scalars.  Let 
	\[
	s_k = {n \choose k}^{-1} \sum_{\substack{\Omega \subset [n] \\|\Omega|=k}}\prod_{i\in \Omega} x_i
	\]
be the normalized $k$th symmetric sum.  Then we have
\[
	s_1 \geq \sqrt{s_2} \geq \ldots \geq \sqrt[n-1]{s_{n-1}} \geq \sqrt[n]{s_{n}}
\]
\end{theorem}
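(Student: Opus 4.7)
The plan is to deduce Maclaurin's chain $s_1 \geq s_2^{1/2} \geq \cdots \geq s_n^{1/n}$ from Newton's inequalities $s_k^2 \geq s_{k-1} s_{k+1}$ for $1 \leq k \leq n-1$, where by convention $s_0 = 1$. These log-concavity-type inequalities are a classical consequence of the fact that $p(t) := \prod_{i=1}^n (t + x_i) = \sum_{k=0}^n \binom{n}{k} s_k t^{n-k}$ has only real roots, so my strategy breaks into two independent pieces: first reduce Maclaurin to Newton, then establish Newton via Rolle's theorem.

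For the reduction, I would introduce the ratios $r_k := s_k / s_{k-1}$ for $k \geq 1$ (all strictly positive since every $x_i > 0$). Newton's inequality rearranges to $r_{k+1} \leq r_k$, so the sequence $(r_k)$ is non-increasing. Since $s_k = \prod_{j=1}^k r_j$, the quantity $s_k^{1/k}$ is exactly the geometric mean of $r_1, \ldots, r_k$. Because each of these $r_j$ is at least $r_{k+1}$, their geometric mean is at least $r_{k+1}$, giving $r_{k+1} \leq s_k^{1/k}$. Multiplying this inequality by $s_k$ and then raising both sides to the power $1/(k+1)$ yields $s_{k+1}^{1/(k+1)} \leq s_k^{1/k}$, which is precisely one link in Maclaurin's chain.

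For Newton's inequalities themselves, I would exploit the real-rootedness of $p$. Differentiating a polynomial with only real roots produces another such polynomial by Rolle's theorem, and the reversal $q(t) \mapsto t^{\deg q} q(1/t)$ preserves real-rootedness when the constant term is nonzero. Applying $\frac{d^{n-k-1}}{dt^{n-k-1}}$ to $p$, then reversing, and finally applying $\frac{d^{k-1}}{dt^{k-1}}$ produces a real-rooted quadratic whose three coefficients, once the factorials are tracked, are proportional to $s_{k-1}$, $2 s_k$, and $s_{k+1}$ by a common positive factor. The discriminant condition for real roots is then exactly $s_k^2 \geq s_{k-1} s_{k+1}$. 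The main obstacle is the book-keeping in this last step: carefully verifying that the coefficients of the extracted quadratic, after repeated differentiation and reversal, land on the claimed values $s_{k-1}, 2 s_k, s_{k+1}$ up to a common positive constant. Once that accounting is done, the discriminant argument finishes Newton, and the ratio argument above finishes Maclaurin.
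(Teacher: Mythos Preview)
Your proposal is correct: the reduction of Maclaurin's chain to Newton's inequalities $s_k^2 \geq s_{k-1}s_{k+1}$ via the monotone ratios $r_k = s_k/s_{k-1}$ is valid, and the extraction of Newton's inequalities by repeated differentiation and reversal of the real-rooted polynomial $\prod_i (t+x_i)$ is the standard argument and works exactly as you outline (the factorial bookkeeping indeed collapses to a quadratic proportional to $s_{k+1}t^2 + 2s_k t + s_{k-1}$, whose nonnegative discriminant is Newton's inequality).

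As for comparison with the paper: the paper does not prove this theorem at all. It simply states Maclaurin's inequalities, notes that $s_1 \geq s_n^{1/n}$ is the ordinary AM--GM inequality, and refers the reader to Hardy, Littlewood, and P\'olya for a proof. Your Newton-to-Maclaurin route is precisely the classical argument one finds in that reference, so you have supplied what the paper only cites.
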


Note that $s_1 \geq \sqrt[n]{s_n}$ is the standard form of the arithmetic-geometric mean inequality.  See~\citet{HardyLittlewoodPolyaBook} for a discussion and proof of this chain of inequalities.

To see that these inequalities immediately imply Conjecture~\ref{conj:ncamgm} when the matrices $\mtx{A}_i$ are mutually commutative, note first that when $d=1$, we have
\[
\E_{\wo}\left[ \prod_{i=1}^k a_i\right] =  {n \choose k}^{-1} \sum_{\substack{\Omega \subset [n] \\|\Omega|=k}}\prod_{i\in \Omega} a_i \leq \left(\frac{1}{n}\sum_{i=1}^n a_i\right)^k = \E_{\wr}\left[ \prod_{i=1}^k a_i\right] \,.
\]
The higher dimensional analogs follow similarly.  If all of the $\mtx{A}_i$ commute, then the matrices are mutually diagonalizable.  That is, we can write $\mtx{A}_i = \mtx{U} \Lambda_i \mtx{U}^T$ where $\mtx{U}$ is an orthogonal matrix, and the $\Lambda_i=\diag(\lambda_1^{(i)},\ldots,\lambda_d^{(i)})$ are all diagonal matrices of the eigenvalues of $\mtx{A}_i$ in descending order.  Then we have
\[
\left\| \E_{\wo}\left[ \prod_{i=1}^k \mtx{A}_i \right]\right\|=
\left\| \E_{\wo}\left[ \prod_{i=1}^k \mtx{\Lambda}_i \right]\right\|= \E_{\wo} \prod_{i=1}^k \lambda_1^{(i)} \leq \E_{\wr} \prod_{i=1}^k \lambda_1^{(i)}  = \left\| \E_{\wr}\left[ \prod_{i=1}^k \mtx{A}_i \right]\right\|
\]
and we also have
\[
\left\| \E_{\wo}\left[ \prod_{i=1}^k \mtx{A}_i \right]\right\|= \E_{\wo}\left[ \left\|\prod_{i=1}^k \mtx{A}_i \right\|\right]~~~\mbox{and}~~~\left\| \E_{\wr}\left[ \prod_{i=1}^k \mtx{A}_i \right]\right\|= \E_{\wr}\left[ \left\|\prod_{i=1}^k \mtx{A}_i \right\|\right]
\]
verifying our conjecture.  In fact, in this case, any order of the matrix products will satisfy the desired arithmetic-geometric mean inequalities.

\subsection{When is it necessary to symmetrize the order?}

In contrast, symmetrizing over the order of the product is necessary for noncommutative operators.  The following example, communicated to us by Aram~\citet{AramGossip}, provides deterministic without-replacement orderings that have exponentially larger norm than the with-replacement expectation.  Let $\omega_n = \pi/n$. For $n\geq 3$, define the collection of vectors
\begin{equation}\label{eq:harmonic-frames-def}
	\vct{a}_{k;n} = \left[\begin{array}{c} \cos\left( k \omega_n \right) \\ \sin \left(k \omega_n \right)\end{array}\right]\,.
\end{equation}
Note that all of the $\vct{a}_{k;n}$ have norm $1$ and, for $1\leq k <n$,  $\langle \vct{a}_{k;n}, \vct{a}_{k+1;n}\rangle = \cos\left(\omega_n \right)$.  The matrices $\mtx{A}_k := \vct{a}_{k;n} \vct{a}_{k;n}^T$ are all positive semidefinite for $1\leq k\leq n$, and we have the identity
\begin{equation}\label{eq:frame-def}
	\frac{1}{n} \sum_{k=1}^n \mtx{A}_k = \tfrac{1}{2} \mtx{I}\,.
\end{equation}
Any set of unit vectors satisfying~\eq{frame-def} is called a \emph{normalized tight frame}, and the vectors~\eq{harmonic-frames-def} form a \emph{harmonic frame} due to their trigonometric origin~\citep[see][]{Hassibi01,Goyal01}. The product of the $\mtx{A}_i$ is given by
\[
	\prod_{i=1}^k \mtx{A}_i = \vct{a}_{k;n} \vct{a}_{1;n}^T \prod_{j=1}^{k-1} \langle \vct{a}_{j;n}, \vct{a}_{j+1;n}\rangle  = \vct{a}_{k;n} \vct{a}_{1;n}^T\cos^{k-1}\left(\omega_n\right)\,,
\]
and hence
\begin{equation*}\label{eq:gm}
	\left\|\prod_{i=1}^k \mtx{A}_i\right\| = \cos^{k-1}\left( \omega_n \right) \geq  2^k\cos^{k-1}(\omega_n) \left\|\left(\frac{1}{n}\sum_{k=1}^n \mtx{A}_k \right)^k\right\| \,.
\end{equation*}
Therefore, the arithmetic mean is less than the deterministically ordered matrix product for all $n\geq 3$. 

It turns out that this harmonic frame example is in some sense the worst case.  The following proposition shows that the geometric mean is always within a factor of $d^k$ of the arithmetic mean for any ordering of the without-replacement matrix product.

\begin{proposition}\label{prop:worst-case}
Let $\mtx{A}_1,\ldots,\mtx{A}_n$ be $d\times d$ positive semidefinite matrices.  Then
\[
	\left\| \E_{\wo}\left[ \prod_{i=1}^k \mtx{A}_{j_i} \right] \right\| \ \leq  d^k \left\| \E_{\wr} \left[ \prod_{i=1}^k \mtx{A}_{j_i} \right] \right\|
\]
\end{proposition}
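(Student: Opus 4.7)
The plan is to chain four elementary observations: pull the norm inside the expectation via the triangle inequality, use submultiplicativity of the operator norm to reduce to a scalar product, apply Maclaurin's inequality (already recorded in Section~3.2) to the scalars, and then convert the scalar arithmetic mean back to $\|\mtx{M}_A\|$ using a trace bound that is available precisely because the $\mtx{A}_i$ are PSD.

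In more detail, first I would write
$$\left\|\E_{\wo}\!\left[\prod_{i=1}^k \mtx{A}_{j_i}\right]\right\| \leq \E_{\wo}\!\left[\left\|\prod_{i=1}^k \mtx{A}_{j_i}\right\|\right] \leq \E_{\wo}\!\left[\prod_{i=1}^k \|\mtx{A}_{j_i}\|\right],$$
using the triangle inequality on the convex combination of matrices in the without-replacement average, then submultiplicativity. Setting $\alpha_i := \|\mtx{A}_i\|$, the innermost expression is exactly $\binom{n}{k}^{-1} \sum_{|\Omega|=k}\prod_{i\in\Omega}\alpha_i$, which is the normalized $k$-th symmetric sum of the $\alpha_i$. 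By Maclaurin's inequality this is bounded by $\bigl(\tfrac{1}{n}\sum_i \alpha_i\bigr)^{k}$.

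It remains to bound $\tfrac{1}{n}\sum_i \|\mtx{A}_i\|$ by $d\|\mtx{M}_A\|$. Here I would use the two facts that for any PSD matrix $\mtx{M}$, $\|\mtx{M}\|\leq \trace(\mtx{M})\leq d\|\mtx{M}\|$. Applied to each $\mtx{A}_i$ (first inequality) and to $\mtx{M}_A$ (second inequality), together with linearity of the trace,
$$\frac{1}{n}\sum_{i=1}^n \|\mtx{A}_i\| \leq \frac{1}{n}\sum_{i=1}^n \trace(\mtx{A}_i) = \trace(\mtx{M}_A) \leq d\,\|\mtx{M}_A\|.$$
Raising to the $k$-th power and recalling that $\|\E_{\wr}[\prod_i \mtx{A}_{j_i}]\| = \|\mtx{M}_A\|^k$ closes the chain.

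There is no real obstacle: every step is standard. The step worth flagging is the trace bound, since it is the only place PSD-ness is genuinely used and it is the source of the factor $d^k$ (submultiplicativity and Maclaurin would otherwise give only the much weaker $n^k$ from the crude estimate $\|\mtx{A}_i\|\leq n\|\mtx{M}_A\|$). So the proof essentially consists of choosing, at the one place where the operator norm and the PSD structure interact, the tighter trace comparison rather than the naive bound.
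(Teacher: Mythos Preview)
Your proof is correct and follows essentially the same chain as the paper's: triangle inequality, submultiplicativity, Maclaurin, and the two PSD trace bounds $\|\mtx{M}\|\le\trace(\mtx{M})\le d\|\mtx{M}\|$. The only cosmetic difference is that the paper passes from $\|\mtx{A}_{j_i}\|$ to $\trace(\mtx{A}_{j_i})$ \emph{before} applying Maclaurin (to the traces rather than to the norms), whereas you apply Maclaurin first and then use the trace bounds on the arithmetic mean; the two orderings are interchangeable.
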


\begin{proof}
If we sample $j_1,\ldots,j_k$ uniformly from $[n]$, then we have
\[
\begin{aligned}
\left\| \E_{\wo}\left[ \prod_{i=1}^k \mtx{A}_{j_i} \right] \right\|  &\leq \E_{\wo}\left[\left\| \prod_{i=1}^k \mtx{A}_{j_i} \right\|\right] \leq  \E_{\wo}\left[\prod_{i=1}^k \left\| \mtx{A}_{j_i} \right\|\right]
\leq  \E_{\wo}\left[\prod_{i=1}^k  \trace(\mtx{A}_{j_i})  \right]\\
&\leq \left(\frac{1}{n} \sum_{i=1}^n  \trace(\mtx{A}_i) \right)^k= \trace\left(\frac{1}{n} \sum_{i=1}^n  \mtx{A}_i \right)^k \leq \left\|\frac{d}{n} \sum_{i=1}^n  \mtx{A}_i \right\|^k
\end{aligned}
\]
Here, the first inequality follows from the triangle inequality.  The second, because the operator norm is submultiplicative.  The third inequality follows because the trace dominates the operator norm.  The fourth inequality is Maclaurin's.  The fifth inequality follows because the trace of a $d\times d$ positive semidefinite matrix is upper bounded by $d$ times the operator norm.  The final inequality is again the triangle inequality.
\end{proof}

Note that this worst-case bound holds for deterministic orders of matrix products as well. Once we apply the submultiplicative property of the operator norm, all of the non-commutativity is washed out of the problem.  Examples of deterministic matrix products saturating this upper bound can be constructed in higher dimensions using frames.  If $d$ even, set 
\begin{equation}\label{eq:even-d-frames}
\vct{f}_{k+1}^T = \sqrt{\frac{2}{d}} \left[\vct{a}_{k;n}^T, \vct{a}_{3k;n}^T, \cdots, \vct{a}_{(d-1)k;n}^T\right]~~~\mbox{for}~k=0,1,\ldots, {n-1}\,,
\end{equation}
and for odd $d$
\begin{equation}\label{eq:odd-d-frames}
\vct{f}_{k+1}^T = \sqrt{\tfrac{2}{d}} \left[  \tfrac{1}{\sqrt{2}}, \vct{a}_{2k;n}^T
 \vct{a}_{4k;n}^T, \cdots, \vct{a}_{(d-1) k;n}^T \right]~~~\mbox{for}~k=0,1,\ldots, {n-1}\,.
\end{equation}
Then one can verify again using standard trigonometric identities that
\[
	\frac{1}{n}\sum_{k=1}^n \vct{f}_k \vct{f}_k^T = \frac{1}{d}\mtx{I}\,,
\]
and that the inner products of adjacent $\vct{f}_i$ are
\[
\begin{aligned}
\vct{f}_{i}^T\vct{f}_{i+1} &= \begin{cases} \frac{2}{d} \cos\left((d/2-1) \omega_n \right)\frac{\sin\left((d/2+1) \omega_n \right)}{\sin\left(\omega_n\right)} - \frac{2}{d}\cos\left( \omega_n \right)  & d~\text{even}
\\
 \frac{2}{d} \cos\left( (d-1)/2 \omega_n \right)\frac{\sin\left((d+1)/2 \omega_n \right)}{\sin\left(\omega_n\right)}-\frac{1}{d} & d~\text{odd}
\end{cases}\,.
\end{aligned}
\]
These inner products are approximately $1- \frac{\pi^2(d^2-1)}{6n^2}$ for large $n$.  Thus, each of these cases violate the arithmetic-geometric mean inequality for the order $(1,2,\ldots, k)$ by a factor of approximately $d^k$ provided $n\geq d$. 

At first glance, the harmonic frames example appears to cast doubt on the validity of Conjecture~\ref{conj:ncamgm}.  However, after symmetrizing over the symmetric group, we can show that the $d=2$ harmonic frames do obey~\eq{bias-conj}.

\begin{theorem}\label{thm:harmonic-frames} Let $\lambda(n)= \,_2F_3\left[\begin{array}{ccc}
1 & -n/2+1/2 & -n/2\\
1/2 & -n+1 
  \end{array}; 1 \right]$. With the $\vct{a}_{k;n}$ defined in~\eq{harmonic-frames-def},
\[ \tfrac{1}{n!}\sum_{\sigma \in S_n} \prod_{i=1}^n \vct{a}_{\sigma(i);n} \vct{a}_{\sigma(i+1);n}^T = - \lambda(n) 2^{-n} \mtx{I}\,,
~~~\text{and}~~~
1\geq \lambda(n) =\mathcal{O}(n^{-1})\,.
 \]
 \end{theorem}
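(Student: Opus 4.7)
The plan is to collapse each permuted matrix product using the unit-norm telescoping identity, show by rotational symmetry that the symmetrized sum must be a scalar multiple of $\mtx{I}$, and then evaluate that scalar via an exponential expansion that reduces to a character sum over subsets of the $n$-th roots of unity.

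First, set $\mtx{A}_k := \vct{a}_{k;n}\vct{a}_{k;n}^T$ and use $\|\vct{a}_{k;n}\|=1$ to telescope $\mtx{A}_{\sigma(n)}\cdots\mtx{A}_{\sigma(1)}=\vct{a}_{\sigma(n);n}\vct{a}_{\sigma(1);n}^T\prod_{j=1}^{n-1}\cos(\omega_n(\sigma(j+1)-\sigma(j)))$. The rotation $R_{\omega_n}$ sends $\vct{a}_{k;n}\mapsto\vct{a}_{k+1;n}$ with $\vct{a}_{n+1;n}=-\vct{a}_{1;n}$, preserving the multiset $\{\mtx{A}_k\}$; hence the symmetrized sum commutes with $R_{\omega_n}$, and since $R_{\omega_n}$ has no real eigenvectors for $n\ge 2$, the sum must be a scalar multiple of $\mtx{I}$. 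Tracing then reduces the scalar to the cyclic cosine average $\tau_n:=\tfrac{1}{n!}\sum_\sigma\prod_{j=1}^n\cos(\omega_n(\sigma(j+1)-\sigma(j)))$ with $\sigma(n+1):=\sigma(1)$.

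To evaluate $\tau_n$, I would expand each cosine as $\tfrac{1}{2}(\zeta^\Delta+\zeta^{-\Delta})$ with $\zeta=e^{i\omega_n}$ and interchange the sum over $\epsilon\in\{\pm1\}^n$ with the sum over $\sigma$. Summation by parts rewrites the exponent as $\sum_j(\epsilon_{j-1}-\epsilon_j)\sigma(j)$; with $\mu_j=(\epsilon_{j-1}-\epsilon_j)/2\in\{-1,0,1\}$ and $\omega=\zeta^2=e^{2\pi i/n}$, the inner permutation sum depends only on the number $m$ of signed pairs and equals $m!^2(n-2m)!\,P_m$, where $P_m:=\sum_{A\cap B=\emptyset,\,|A|=|B|=m}\omega^{S(A)-S(B)}$ and $S(A):=\sum_{a\in A}a$. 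The crucial step is to recognize $P_m$ as the diagonal coefficient $[u^mv^m]\prod_{k=1}^n(1+u\omega^k+v\omega^{-k})$---disjointness of $A,B$ is automatic because each factor contributes at most one of $u,v$. Factoring the argument as a quadratic in $\omega^k$ and invoking $\prod_{z^n=1}(z-r)=(-1)^n(r^n-1)$ collapses this product to $(-1)^{n+1}(u^n+v^n)+U_n(uv)$, where $U_n$ satisfies the Chebyshev-type recurrence $U_{n+2}=U_{n+1}-wU_n$ with $U_0=2,\,U_1=1$; reading off coefficients gives $P_m=(-1)^m\frac{n}{n-m}\binom{n-m}{m}$ for $1\le m\le n/2$ and $P_0=1$.

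Finally, weighting $P_m$ by the $2\binom{n}{2m}$ sign patterns $\epsilon$ with exactly $2m$ cyclic flips and simplifying yields $\tau_n=2^{1-n}\sum_{m=0}^{\lfloor n/2\rfloor}(-1)^m\frac{n\,m!\,(n-m-1)!}{(2m)!(n-2m)!}$, and the Gauss duplication identity $(-n/2)_k(-(n-1)/2)_k=(-n)_{2k}/4^k$ together with $(-(n-1))_k=(-1)^k(n-1)!/(n-1-k)!$ then identifies this as the truncated hypergeometric series defining $\lambda(n)$, producing the scalar $\lambda(n)2^{-n}$ multiplying $\mtx{I}$ (up to the sign convention in the statement). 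The bounds $\lambda(n)\le 1$ and $\lambda(n)=\mathcal{O}(n^{-1})$ follow from standard asymptotic analysis of the resulting hypergeometric at unit argument, or equivalently from a direct cancellation estimate on the alternating sum. The main obstacle is the generating-function step: recognizing the Chebyshev-type recurrence for $U_n$ from the factored product and cleanly extracting the diagonal coefficient without picking up spurious boundary contributions from the $(u^n+v^n)$ piece; everything else is routine Pochhammer manipulation.
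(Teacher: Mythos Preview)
Your approach is essentially the paper's: both reduce to showing the symmetrized sum is a scalar multiple of $\mtx{I}$, both arrive at the same generating function $\prod_{k=1}^n(1+u\omega^k+v\omega^{-k})$ over the $n$-th roots of unity, both extract the diagonal coefficient $P_m=(-1)^m\tfrac{n}{n-m}\binom{n-m}{m}$, and both finish by recognizing the resulting sum as the ${}_3F_2$ series defining $\lambda(n)$. Your packaging is somewhat cleaner---you get scalar-ness directly from commutation with the rotation $R_{\omega_n}$ and then trace to obtain the cyclic cosine product, whereas the paper works with the variational form $v^T S v$ and separately argues that the $v$-dependent Fourier coefficients $d_k$ cancel; and you evaluate the product via the Chebyshev-type recurrence $U_{n+2}=U_{n+1}-wU_n$, whereas the paper matches zero sets of $F_n(x,-y)$ against an explicit polynomial---but these are stylistic variations on the same argument, not a different route.
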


\noindent
This theorem additionally verifies that there is an asymptotic gap between the arithmetic and geometric means of the harmonic frames example after symmetrization.  We include a full proof of this result in Appendix~\ref{sec:frames}.   The proof treats the norm variationally using the identity that $\|\mtx{X}\|_2$ is the maximum of  $\mtx{v}^{T}\mtx{X}\vct{v}$ over all unit vectors $\vct{v}$. Our computation then reduces to effectively computing a Fourier transform of the function of $\vct{v}$ in an appropriately defined finite group.  We show that the Fourier coefficients can be viewed as enumerating sets, and we compute them exactly using generating functions.

The combinatorial argument that we use to prove Theorem~\ref{thm:harmonic-frames} is very specialized.  To provide a broader set of examples, we now turn to show that Conjecture~\ref{conj:ncamgm} does in fact hold for many classes of \emph{random} matrices.

\section{Random matrices}\label{sec:random}

In this section, we show that if $\mtx{A}_1,\ldots, \mtx{A}_n$ are generated i.i.d. from certain distributions, then Conjecture~\ref{conj:ncamgm} holds in expectation with respect to the $\mtx{A}_i$.  Section~\ref{sec:random-ncamgm} assumes that $\mtx{A}_i = \mtx{Z}_i \mtx{Z}_i^T$ where $\mtx{Z}_i$ have independent entries, identically sampled from some symmetric distribution.  In Section~\ref{sec:sgd-consequences}, we explore when the matrices $\mtx{A}_i$ are random rank-one perturbations of the identity as was the case in the IGM and Kaczmarz examples. 

\subsection{Random matrices satisfy the noncommutative arithmetic-geometric mean inequality}\label{sec:random-ncamgm}

In this section, we prove the following
\begin{proposition}\label{thm:random}
For each $i=1,\ldots,n$, suppose $\mtx{A}_i=\vct{Z}_i \vct{Z}_i^T$ with $\vct{Z}_i$ a $d\times r$ random matrix whose entries are i.i.d. samples from some symmetric distribution.  Then Conjecture~\ref{conj:ncamgm} holds in expectation.
\end{proposition}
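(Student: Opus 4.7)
The plan is to exploit the strong signed-permutation symmetry inherited from the i.i.d.\ symmetric distribution of the entries of $\mtx{Z}_i$, which reduces both matrix-valued inequalities of Conjecture~\ref{conj:ncamgm} to scalar inequalities dispatched by power-mean and Jensen arguments. First I would observe that for any signed permutation matrix $S$, $S\mtx{Z}_i\stackrel{d}{=}\mtx{Z}_i$ jointly in $i$, and hence $(S\mtx{A}_iS^T)_{i=1}^n\stackrel{d}{=}(\mtx{A}_i)_{i=1}^n$. Consequently, the expectation of any matrix polynomial in the $\mtx{A}_i$ commutes with every signed permutation; since the signed-permutation representation on $\R^d$ is irreducible, Schur's lemma forces each such expectation to be a scalar multiple of the identity, and both sides of \eq{bias-conj} and \eq{weak-variance-conj} reduce to a single scalar equal to $\tfrac{1}{d}$ times the trace.

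For \eq{bias-conj}, let $\bar{\mtx{A}}:=\tfrac{1}{n}\sum_i\mtx{A}_i$ and $\sigma^2:=\E[Z_{11}^2]$. The without-replacement expectation equals $(\E\mtx{A})^k=(\sigma^2 r)^k\mtx{I}$ since distinct indices give independent factors, while the with-replacement expectation equals $\E[\bar{\mtx{A}}^k]$. Two classical scalar inequalities then finish the proof: the power-mean inequality on the non-negative eigenvalues of $\bar{\mtx{A}}$ yields $\tfrac{1}{d}\trace(\bar{\mtx{A}}^k)\geq\bigl(\tfrac{1}{d}\trace\bar{\mtx{A}}\bigr)^k$, and scalar Jensen applied to $x\mapsto x^k$ on $[0,\infty)$ yields $\E\bigl[\bigl(\tfrac{1}{d}\trace\bar{\mtx{A}}\bigr)^k\bigr]\geq\bigl(\E[\tfrac{1}{d}\trace\bar{\mtx{A}}]\bigr)^k=(\sigma^2 r)^k$; chaining these completes \eq{bias-conj}.

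For \eq{weak-variance-conj}, write $\mtx{M}:=\prod_{j=1}^k\mtx{A}_{i_j}$. Integrating from the innermost factor outward, using the identity $\E[\mtx{A}^2]=c_2\mtx{I}$ with $c_2:=\tfrac{1}{d}\E[\trace(\mtx{A}^2)]$, gives $\E_\mtx{A}\E_\wo[\mtx{M}^T\mtx{M}]=c_2^k\mtx{I}$, while the same iteration under $\E_\wr$ gives $\E_\wr[\mtx{M}^T\mtx{M}]=\Phi_n^k(\mtx{I})$ for the (random, completely positive) map $\Phi_n(X):=\tfrac{1}{n}\sum_i\mtx{A}_iX\mtx{A}_i$. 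The goal becomes $\tfrac{1}{d}\E[\trace(\Phi_n^k(\mtx{I}))]\geq c_2^k$. I would expand $\trace(\Phi_n^k(\mtx{I}))=\tfrac{1}{n^k}\sum_{\vec i}\|\mtx{A}_{i_k}\cdots\mtx{A}_{i_1}\|_F^2$, group by the partition $\pi$ of $\vec i$ induced by equal-index coincidences, and establish the partition-wise lower bound $\E_\mtx{A}[\|\mtx{A}_{i_k}\cdots\mtx{A}_{i_1}\|_F^2\mid\pi]\geq dc_2^k$. The all-distinct partition attains this with equality and reproduces the without-replacement value, while each coarser partition is handled by combining (i)~power-mean on higher moments, $c_{2\ell}\geq c_2^\ell$, (ii)~scalar Jensen for $\E[(\cdot)^\ell]$, and (iii)~the equivariant decomposition $\E[\mtx{A}X\mtx{A}]=\alpha X+\beta\diag(X)+\gamma\trace(X)\mtx{I}$ forced by the same signed-permutation symmetry, whose coefficients satisfy $\alpha+\beta+\gamma d=c_2$.

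The hard part will be executing the partition-wise bound in full generality. For $k\leq 3$ each partition can be verified by hand, and the computation reveals a suggestive pattern: each substitution of a nested factor collapses an inner block $\mtx{A}^{2\ell}$ via power-mean plus Jensen, while the three pieces of $\E[\mtx{A}\cdot\mtx{A}]$ recombine, using $\alpha+\beta+\gamma d=c_2$, into exactly the factor $c_2$ required at the next recursion step. Pushing this collapse into a clean induction on the partition structure is the technical heart of the argument; by contrast, the \eq{bias-conj} half of the proposition falls out of the symmetry framework cleanly.
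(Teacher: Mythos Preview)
Your signed-permutation/Schur argument is correct and neatly explains why every $\mtx{A}$-expectation here is a scalar matrix; the paper instead just uses the cruder bound $\|M\|\ge\tfrac{1}{d}\trace(M)$. For \eq{bias-conj} your proof is complete and runs parallel to the paper's: both compute the without-replacement side as $(r\sigma^2)^k\mtx{I}$ by independence, pass to the normalized trace on the with-replacement side, and close with scalar Jensen. The only cosmetic difference is that you invoke the power-mean inequality on the eigenvalues of $\bar{\mtx{A}}$, whereas the paper expands $\trace\bigl((\tfrac{1}{nr}\mtx{W}\mtx{W}^T)^k\bigr)$ entrywise and keeps only the summands with all row indices equal to~$1$.

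For \eq{weak-variance-conj} there is a genuine gap. Your partition-wise lower bound $\E_{\mtx{A}}\bigl[\|\mtx{A}_{i_k}\cdots\mtx{A}_{i_1}\|_F^2\,\big|\,\pi\bigr]\ge d\,c_2^k$ is left open for general $k$, and the proposed recursion via $\E[\mtx{A}X\mtx{A}]=\alpha X+\beta\,\diag(X)+\gamma\,\trace(X)\,\mtx{I}$ does not obviously close: iterating the map mixes the three invariant pieces, and the single linear constraint $\alpha+\beta+\gamma d=c_2$ is not enough to force the required inequality at every step. The paper avoids this structural induction entirely by working entrywise. It writes the $(u,u)$ entry of $\mtx{A}_{i_k}\cdots\mtx{A}_{i_1}^2\cdots\mtx{A}_{i_k}$ as a sum over index paths $p\in[d]^{2k}$. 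Because $A^{(i)}_{p,q}=\sum_l Z^{(i)}_{pl}Z^{(i)}_{ql}$ with the $Z$-entries i.i.d.\ and \emph{symmetric}, every summand in this expansion has nonnegative expectation (only monomials in which each $Z^{(i)}_{ab}$ appears to an even power survive). One may therefore discard all but the mirrored paths, leaving $\sum_{p\in[d]^k}\E\bigl[\prod_j (A^{(i_j)}_{p_{j-1},p_j})^2\bigr]$, and then the moment inequality $\E[w^{2m}]\ge(\E[w^2])^m$ bounds each with-replacement summand below by its without-replacement counterpart. The idea you are missing is exactly this termwise nonnegativity of the entrywise expansion---it is where the symmetric-distribution hypothesis actually does its work---and it replaces your coincidence-partition bookkeeping with a one-line positivity-plus-Jensen step.
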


\begin{proof}
Suppose the entries of each $\mtx{Z}_i$ have finite variance $\sigma^2$ (the theorem would be otherwise vacuous if we assumed infinite variance).  Let the $(a,b)$ entry of $\mtx{Z}_i$ be denoted by $Z_{a,b}^{(i)}$.  Also, denote by $\mtx{W}$ the matrix with all of the $\mtx{Z}_i$ stacked as columns: $\mtx{W} = \sigma^{-1} [\mtx{Z}_1,\ldots,\mtx{Z}_n]$.

Let's first prove that \eq{bias-conj} holds in expectation for these matrices.  First, consider the without-replacement samples, which are considerably easy to analyze. Let $(j_1,\ldots,j_k)$ be a without-replacement sample from $[n]$.  Then
\[
	\left\|\E\left[ \prod_{i=1}^k \mtx{A}_{j_i} \right] \right\|= \|\E[\mtx{A}_1]^k\| =  r^k\sigma^{2k}\,.
\]


For the arithmetic mean, we can compute
\begin{align}
\nn		&r^{-k} \sigma^{-2k}\left\|\E\left[ \left( \frac{1}{n} \sum_{i=1}^n \mtx{A}_i \right)^k \right]\right\|
 \geq	\frac{	1}{r^k \sigma^{2k} d} \trace\left(\E\left[ \left( \frac{1}{n} \sum_{i=1}^n \mtx{A}_i \right)^k \right]\right)\\
\label{eq:wishart-power}=& \E\left[ d^{-1}\trace\left( \left( \frac{1}{nr \sigma^2} \sum_{i=1}^n \mtx{A}_i \right)^k \right)\right]=  \E\left[ d^{-1}\trace\left( \left( \frac{1}{nr}\mtx{W}\mtx{W}^T \right)^k \right)\right]\\
\nn =& d^{-1}(nr)^{-k}  \sum_{\{a_1,\ldots,a_k\}=1}^d\sum_{\{b_1,\ldots,b_k\}=1}^{nr}    \E[W_{a_1,b_1} W_{a_2,b_1} W_{a_2,b_2} W_{a_3,b_2} \ldots W_{a_k , b_k}W_{a_1,b_k} ]\\
\label{eq:nasty1} =& (nr)^{-k}\sum_{\{a_2,\ldots,a_k\}=1}^d\sum_{\{b_1,\ldots,b_k\}=1}^{nr}    \E[W_{1,b_1} W_{a_2,b_1} W_{a_2,b_2} W_{a_3,b_2} \ldots W_{a_k , b_k}W_{1,b_k} ]\,.
	\end{align}
Note that since $W_{ij}$ are iid, symmetric random variables, each term in this sum is zero if it contains an odd power of $W_{ij}$ for some $i$ and $j$.  If all of the powers in a summand are even, its expected value is bounded below by $1$.  A simple lower bound for this final term~\eq{nasty1} thus looks only at the contribution from when all of the indices $a_i$ are set equal to $1$.
\[
(nr)^{-k}\sum_{\{b_1,\ldots,b_k\}=1}^{nr}    \E[W_{1,b_1}^2 W_{1,b_2}^2 \ldots W_{1 , b_k}^2 ] = \E\left[ \left( \frac{1}{nr} \sum_{b=1}^{nr} W_{1,b}^2\right)^k\right]  \geq
\left( \E\left[  \frac{1}{nr} \sum_{b=1}^{nr} W_{1,b}^2 \right] \right)^k  = 1\,.
\]
Here the inequality is Jensen's. This calculation proves~\eq{bias-conj} for our family of random matrices.  That is, we have demonstrated that the expected value of the with-replacement sample has greater norm than the expected value of the without-replacement sample.

To verify that~\eq{weak-variance-conj} holds for our random matrix model, we first record the following property about the fourth moments of the entries of the $\mtx{A}_i$.  Let $\xi:= \E[G_{ij}^4]^{1/4}$.   Then we can verify by direct calculation that
\begin{equation}\label{eq:mom-calc}
 \E[A^{u}_{i_1,j_1}A^{u}_{i_2,j_2}] = 
\begin{cases} 
r(r-1) \sigma^{4} + r\xi^4 & \{i_1,j_2\} = \{i_2,j_2\} \text{ and } i_1 = i_2 \\
r \sigma^{4} & \{i_1,j_2\} = \{i_2,j_2\} \text{ and } i_1 \neq i_2 \\
0 & \text{otherwise}
\end{cases}
\end{equation}

A consequence of this lemma is that $\E[ \mtx{A}_i^2 ] = (r(r+d-1)\sigma^4 + r\xi^4)\mtx{I}_d$.  Using this identity, we can set $\zeta:= r(r+d-1)\sigma^4 + r\xi^4$ and we then have
\begin{align*}
\E[\mtx{V}(i_1,\dots,i_k)] 
= \E[ \mtx{A}_{i_k} \dots \mtx{A}_{i_1}^2 \dots \mtx{A}_{i_k}]
= \zeta \E[ \mtx{A}_{i_k} \dots \mtx{A}_{i_2}^2 \dots \mtx{A}_{i_k} ]=
\cdots=  \zeta^{k}  \mtx{I}_d
\end{align*}

We compute this identity in a second way that describes its
combinatorics more explicitly, which we will use as to derive our
lower bound.
\begin{align*}
\E[V_{u,v}( i_1,\dots,i_k)] = & 
\E\left[\sum_{ p \in [d]^{2k}}   A^{(i_1)}_{u,p_1}A^{(i_{2n})}_{p_2,v} \prod_{j=2}^{k-1} A^{(i_j)}_{p_{i_{j-1}},p_{i_j}} A^{(i_{2k - j + 1})}_{p_{i_{2k - j}},p_{i_{2k - j + 1}}} \right] \\
= &
\sum_{p \in [d]^{2k}} \E[A^{(i_1)}_{u,p_1}A^{(i_{2n})}_{p_2,v}] \prod_{j=2}^{k-1} \E[A^{(i_j)}_{p_{i_{j-1}},p_{i_j}} A^{(i_{2k - j + 1})}_{p_{i_{2k - j}},p_{i_{2k - j + 1}}}] 
\end{align*}
The second equality uses linearity coupled with the fact that
$i_1,\dots,i_k$ are distinct, hence $\E[A^{(i_j)}_{u,v} A^{(i_l)}_{u',v'}]
= \E[A^{(i_j)}_{u,v}] \E[ A^{(i_l)}_{u',v'}]$ since
elements from distinct matrices are independent. Many of the terms in this sum contain odd powers which are zero.  Using the fact that $\mtx{A} = \mtx{A}^T$, we see
that all terms that are non-zero must contain only products of two forms: $A_{uu}^2$ or $A_{uv}^2$. Then, we can write the sum:
\[ V_{u,v}( i_1,\dots,i_k)= \sum_{p \in [d]^{k}} \E[(A^{(i_1)}_{u,p_{1}})^2]  \prod_{j=2}^{k} \E[(A^{(i_j)}_{p_{j-1},p_{j}})^2] \]

Now consider the case that some index may be repeated (i.e., there
exist $k,l$ such that $i_j = i_l$ for $j \neq l$).  The key
observation is the following. Let $w$ be a real-valued random variable
with a finite second moment. Then,
\begin{equation}
 \E[w^{2p}] \geq \E[w^2]^p \text{ for } p=0,1,\dots,n 
\label{eq:moments}
\end{equation}
With equality only for $p=0,1$.  This is Jensen's inequality applied
to $x^{p}$ for $x \geq 0$ (since $w$ is real then $w^2$ is positive,
and $x^{p}$ is convex on $[0,\infty)$ for $p=0,1,2,\dots$).  To verify the inequality, let $n_i$ be the number of times index $i$ is repeated and observe
\begin{align}
\nn \E[V_{u,v}(i_1,\dots,i_k)] &=
\E[\sum_{\bar p \in [d]^{2}} A^{(i_1)}_{u,p(1)}A^{(i_{2N})}_{p(2),v} \prod_{j=2}^{k-1} A^{(i_j)}_{p(i_{j-1}),p(i_j)} A^{(i_{2k - j + 1})}_{p(i_{2k - j}),p(i_{2k - j + 1})}] \\
\label{eq:drop-terms} &\geq 
\sum_{p \in [d]^{k}} \E[ (A^{(i_1)}_{u,p(1)})^2  \prod_{j=2}^{k}(A^{(i_j)}_{p(j-1),p(j)})^2]\\
\label{eq:jensen-bound}&\geq 
\sum_{p \in [d]^{k}} \E[ (A^{(i_1)}_{u,p(1)})^2]  \prod_{j=2}^{k} \E[(A^{(i_j)}_{p(j-1),p(j)})^2]\,.
\end{align}
\eq{drop-terms} follows from~\eq{mom-calc}, since all
terms are non-negative.~\eq{jensen-bound} inequality is repeated application
of~\eq{moments}.  The final expression is precisely equal to the without-replacement average.  Now, the with replacement average can be bounded as
\[
\left\|\E_{\wr}\left[ \prod_{j=1}^k \mtx{A}_{i_{k-j+1}}\prod_{j=1}^k \mtx{A}_{i_j}\right] \right\| 
\geq \frac{1}{d} \E_{\wr}\left[ \trace\left(\prod_{j=1}^k \mtx{A}_{i_{k-j+1}}\prod_{j=1}^k \mtx{A}_{i_j}\right] \right)\\
=  \E_{\wr} [V_{1,1}(i_1,\ldots,i_n)]\,.
\]
Since each term in this last expression exceeds the without-replacement expectation, this completes the proof.
\end{proof}

The arguments used to prove Theorem~\ref{thm:random} grossly undercount the number of terms that contribute to the expectation.  Bounds on the quantity~\eq{wishart-power} commonly arise in the theory of random matrices~\citep[see the survey by][for more details and an extensive list of references]{Bai99}.  Indeed, if we let $d=\delta n$ and assume that $W_{ij}$ have bounded fourth moment, we have that~\eq{wishart-power} tends to $(1+\sqrt{\delta})^{2k}$ almost surely a $n \rightarrow \infty$.  That is, the gap between the with- and without-replacement sampling grows exponentially with $k$ in this scaling regime. Similarly, there is an asymptotic, exponential gap between the with and without-replacement expectations in~\eq{weak-variance-conj}.     Observe that~\eq{moments} is strict for $p \geq 2$ for $\chi$-squared random variables.  Thus, for Wishart matrices, if there is even a single repeated value, i.e., $i_j = i_l$ for $j \neq l$, inequality~\eq{jensen-bound} is \emph{strict}.  In Appendix~\ref{sec:gaussian-calc}, we analyze the case where the $\mtx{Z}_i$ are Gaussian (and hence the $\mtx{A}_i$ are Wishart) and demonstrate that the ratio of the expectation is bounded below by $ r e^{\frac{1}{4k(k+1)}}
\left( \frac{16k}{e^2 r (r+d+1)} \right)^{k}$.

\subsection{Random vectors and the incremental gradient method}\label{sec:sgd-consequences}

We can also use a random analysis to demonstrate that for the least-squares problem~\eq{lsopt}, without-replacement sampling outperforms with-replacement sampling if the data is randomly generated. 

Let's look at one step of the recursion~\eq{basic-recursion} and assume that the $\mtx{a}_i$ are sampled i.i.d. from some distribution.  Assume that the moments $\mtx{\Lambda}:=\E[\vct{a}_i \vct{a}_i^T] $ and $\mtx{\Delta}:=\E[\|\vct{a}_i\|^2 \vct{a}_i \vct{a}_i^T] $ exist.  Then we see immediately that
\begin{equation*}\label{eq:wo-sgd-recursion}
\E_{\wo}[\|\vct{x}_k-\vct{x}_\star\|^2]  = \E_{\wo}[\vct{x}_k - \vct{x}_\star]^T (\mtx{I}-2\gamma \mtx{\Lambda} + \gamma^2\mtx{\Delta})\E_{\wo}[\vct{x}_k - \vct{x}_\star] + \rho^2\gamma^2\trace(\mtx{\Lambda})
\end{equation*}
because $\vct{a}_{j_k}$ is chosen independently from $(\vct{a}_{j_{1}}, \ldots,\vct{a}_{j_{k-1}})$
On the other hand, in the with-replacement model, we have
\begin{equation*}\label{eq:wo-sgd-recursion}
\E_{\wr}[\|\vct{x}_k-\vct{x}_\star\|^2]  = \E_{\wr}\left[ (\vct{x}_k - \vct{x}_\star)^T (\mtx{I}-2\gamma \mtx{\Lambda}_n + \gamma^2\mtx{\Delta}_n)(\vct{x}_k - \vct{x}_\star)\right] + \rho^2\gamma^2\trace(\mtx{\Lambda})
\end{equation*}
where
\[
\mtx{\Lambda}_n:= \frac{1}{n} \sum_{i=1}^n \vct{a}_i \vct{a}_i^T ~~~\mbox{and}~~~\mtx{\Delta}_n:=\frac{1}{n} \sum_{i=1}^n \|\vct{a}_i\|^2\vct{a}_i \vct{a}_i^T \,.
\]
In this case, we cannot distribute the expected value because the vector $\vct{x}-\vct{x}_\star$ depends on all $\vct{a}_i$ for $1\leq i \leq n$.  To get a flavor for how these differ, consider the conditional expectation
\[
\begin{aligned}
\E_{\wr}\left[\|\vct{x}_k-\vct{x}_\star\|^2~|~\{\vct{a}_i\}\right] &\leq
 \left(1-2\gamma \lambda_{\mathrm{min}}(\mtx{\Lambda}_n) + \gamma^2\lambda_{\mathrm{max}}(\mtx{\Delta}_n)\right)
\E_{\wr}\left[\|\vct{x}_{k-1}-\vct{x}_\star\|^2~|~\{\vct{a}_i\}\right] \\
&\qquad\qquad\qquad\qquad+ \rho^2\gamma^2\trace(\mtx{\Lambda})
\end{aligned}
\]

Similarly,
\[
	\E_{\wo}\left[\|\vct{x}_k-\vct{x}_\star\|^2\right]  \leq \left(1-2\gamma \lambda_{\mathrm{min}}(\mtx{\Lambda}) + \gamma^2\lambda_{\mathrm{max}}(\mtx{\Delta})\right)
\E_{\wo}\left[\|\vct{x}_{k-1}-\vct{x}_\star\|^2\right] + \rho^2\gamma^2\trace(\mtx{\Lambda})\,.
\]

Expanding out these recursions, we have
\[
	\begin{aligned}
	\E_{\wo}\left[\|\vct{x}_k-\vct{x}_\star\|^2\right]  &\leq \left(1-2\gamma \lambda_{\mathrm{min}}(\mtx{\Lambda}) + \gamma^2\lambda_{\mathrm{max}}(\mtx{\Delta})\right)^k
\E_{\wo}\left[\|\vct{x}_{0}-\vct{x}_\star\|^2\right] + \tfrac{\rho^2\gamma\trace(\mtx{\Lambda})}{2\lambda_{\mathrm{min}}(\mtx{\Lambda})-\gamma\lambda_{\mathrm{max}}(\mtx{\Delta})}\\
	\E_{\wr}\left[\|\vct{x}_k-\vct{x}_\star\|^2~|~\{\vct{a}_i\}\right] &\leq \left(1-2\gamma \lambda_{\mathrm{min}}(\mtx{\Lambda}_n) + \gamma^2\lambda_{\mathrm{max}}(\mtx{\Delta}_n)\right)^k
\E_{\wr}\left[\|\vct{x}_k-\vct{x}_\star\|^2~|~\{\vct{a}_i\}\right]  \\
&\qquad\qquad\qquad\qquad\qquad+ \tfrac{\rho^2\gamma\trace(\mtx{\Lambda}_n)}{2\lambda_{\mathrm{min}}(\mtx{\Lambda}_n)-\gamma\lambda_{\mathrm{max}}(\mtx{\Delta}_n)}
	\end{aligned}
\]

Now, since $\sum_i \vct{a}_i \vct{a}_i^T$ is positive definite and since $\lambda_{\mathrm{min}}$ is concave concave on Hermitian matrices, we have by Jensen's inequalty that
\[
\E[\lambda_{\mathrm{min}}(\mtx{\Lambda}_n)]=\E\left[\lambda_{\mathrm{min}}\left(\frac{1}{n}\sum_{i=1}^n \vct{a}_i \vct{a}_i^T\right) \right] \leq 
\lambda_{\mathrm{min}}\left(\E\left[\frac{1}{n}\sum_{i=1}^n \vct{a}_i \vct{a}_i^T\right]\right)  =
\lambda_{\mathrm{min}}\left( \mtx{\Lambda }\right)\,,
\]
and, since $\lambda_{\mathrm{max}}$ is convex for symmetric matrices,
\[
\E\left[\lambda_{\mathrm{max}}\left(\mtx{\Delta}_n\right)\right]=
\E\left[\lambda_{\mathrm{max}}\left(\frac{1}{n}\sum_{i=1}^n\|\vct{a}_i\|^2 \vct{a}_i \vct{a}_i^T\right)\right]
\geq 	\lambda_{\mathrm{max}}\left(\E\left[\frac{1}{n}\sum_{i=1}^n\|\vct{a}_i\|^2 \vct{a}_i \vct{a}_i^T\right]\right)
=\lambda_{\mathrm{max}}(\mtx{\Delta})\,.
\]
This means that the with-replacement upper bound is worse than the without-replacement estimate with reasonably high probability on most models of $\mtx{a}_i$. Under mild conditions on $\mtx{a}_i$ (including Gaussianity, bounded entries, subgaussian moments, or bounded Orlicz norm), we can estimate tail bounds for the eigenvalues of $\Lambda_n$ and $\Delta_n$~\citep[by applying the techniques of][for example]{Tropp11}.  These large deviation inequalities provide quantitative estimates of the gap between with- and without-replacement sampling for the least mean squares and randomized Kaczmarz algorithms.  Similar, but more tedious analysis, would reveal that with-replacement sampling fares worse with diminishing step sizes as well.

\section{Numerical Evidence}

As described in the introduction, there is substantial numerical evidence that with-replacement sampling underperforms without-replacement sampling in many randomized algorithms.  We invite the interested reader to consult~\citet{Bottou09,RechtRe11,RechtReSIGMOD12}, and many other articles in the machine learning literature to substantiate these empirical claims.  However, for completeness, we provide a few examples demonstrating the gap for the examples in Section~\ref{sec:examples}.

In Figure~\ref{fig:compare}, we display six comparisons of with- and without-replacement sampling.  In the first row, we show the discrepancy when running the randomized Kaczmarz algorithm when the rows of $\mtx{\Phi}$ are the $d$-dimensional, defined by~\eq{even-d-frames}.  In the second row, we plot the results for incremental gradient descent with $\vct{a}_i = \vct{f}_i$ in the same harmonic frames example.  Finally, the third row plots performance when the rows of $\mtx{\Phi}$ are generated i.i.d. from Haar measure on the sphere.  In all three cases, without-replacement sampling converges faster than with-replacement sampling, and when $d$ and $n$ are close, the convergence rate is considerably faster.

\begin{figure}
\centering
\begin{tabular}{cc}
\includegraphics[width=2.5in]{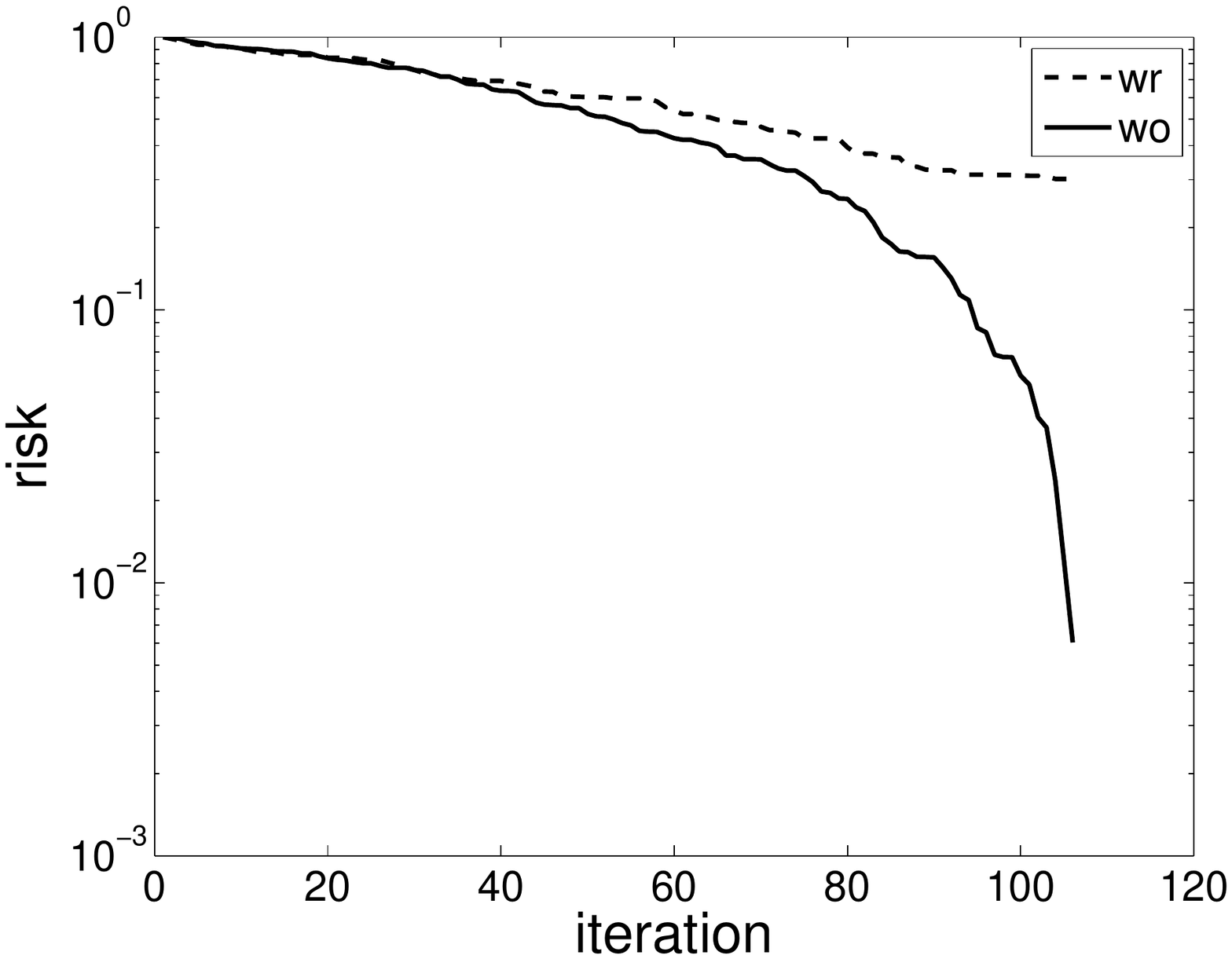}&
\includegraphics[width=2.5in]{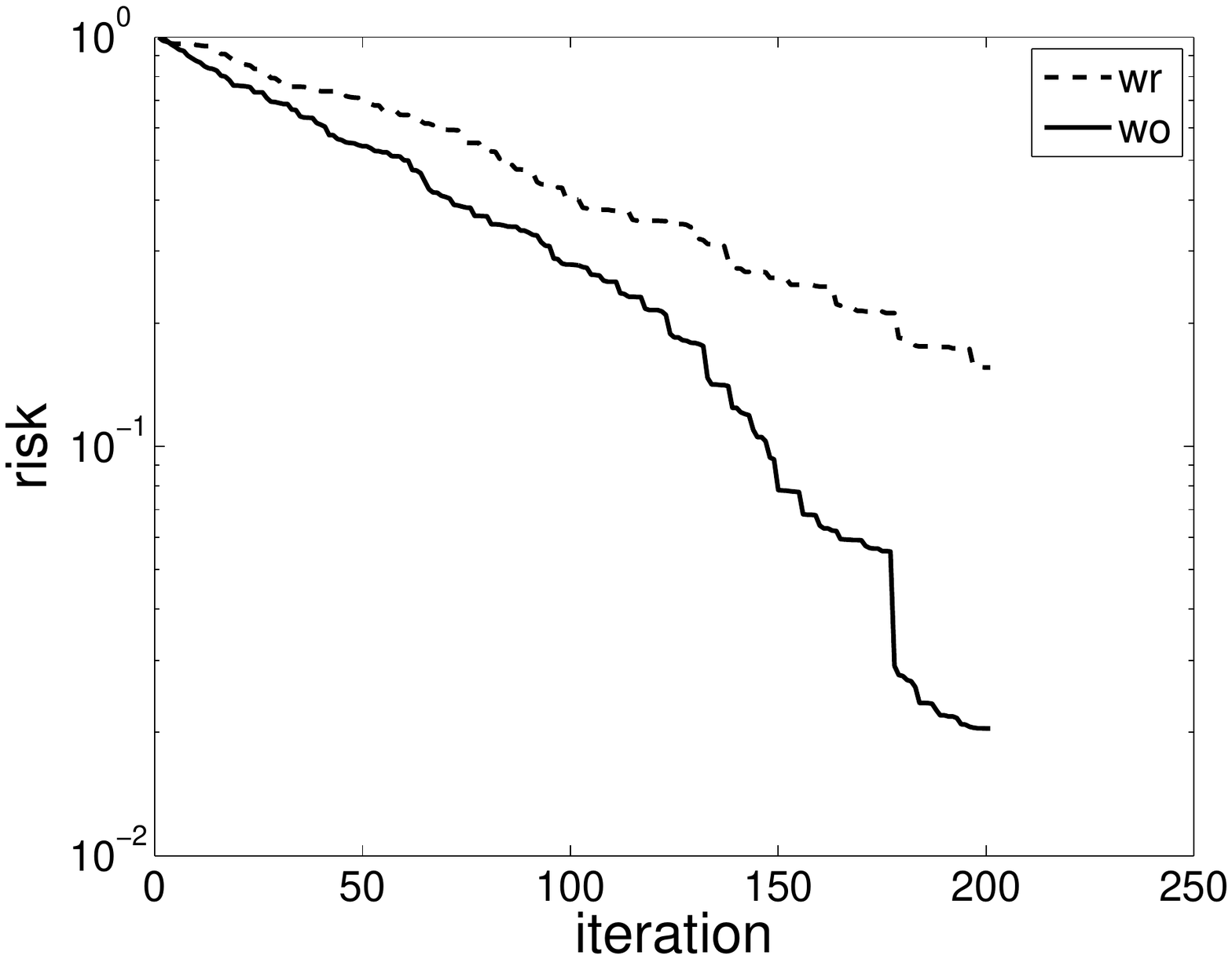}\\
\includegraphics[width=2.5in]{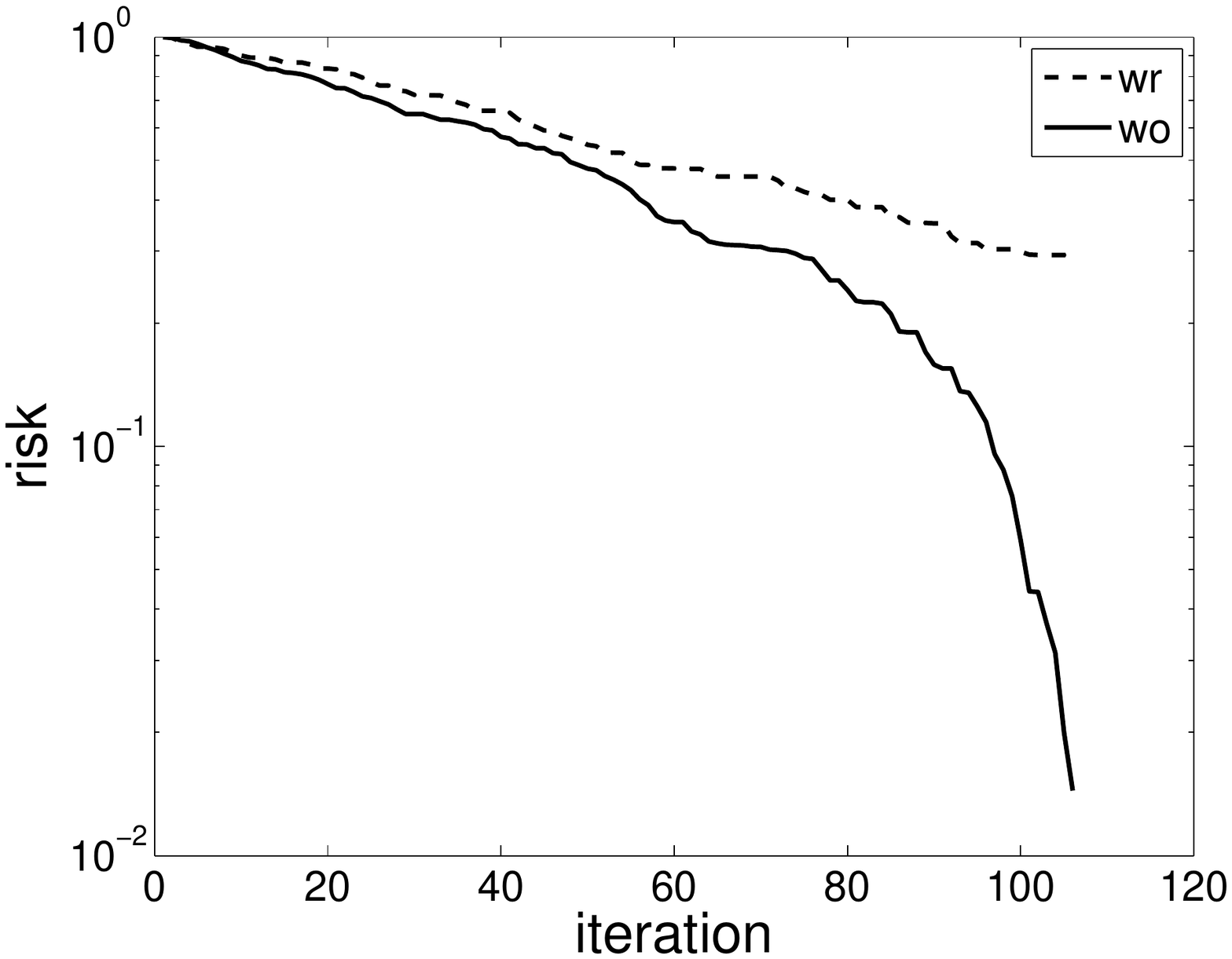}&
\includegraphics[width=2.5in]{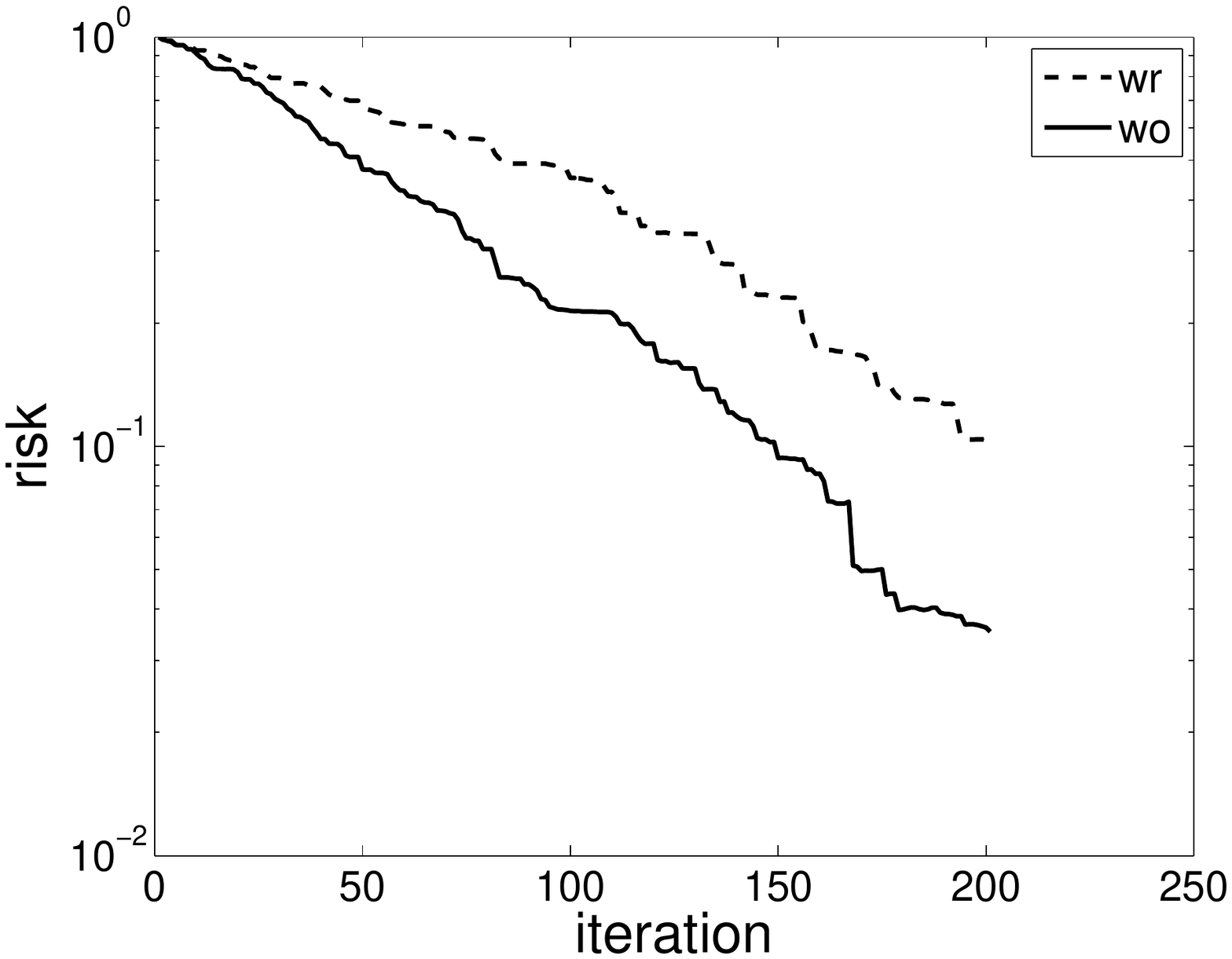}\\
\includegraphics[width=2.5in]{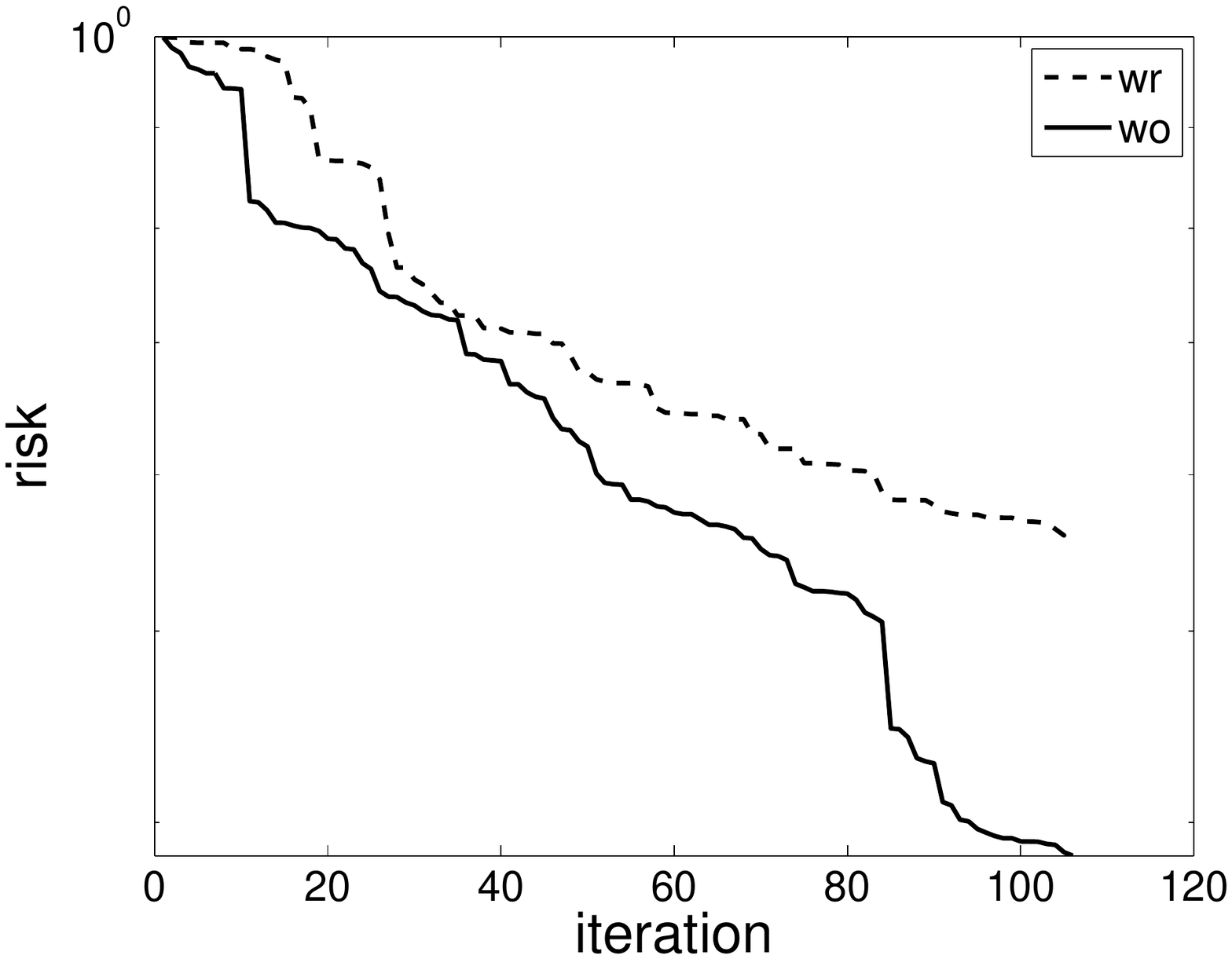}&
\includegraphics[width=2.5in]{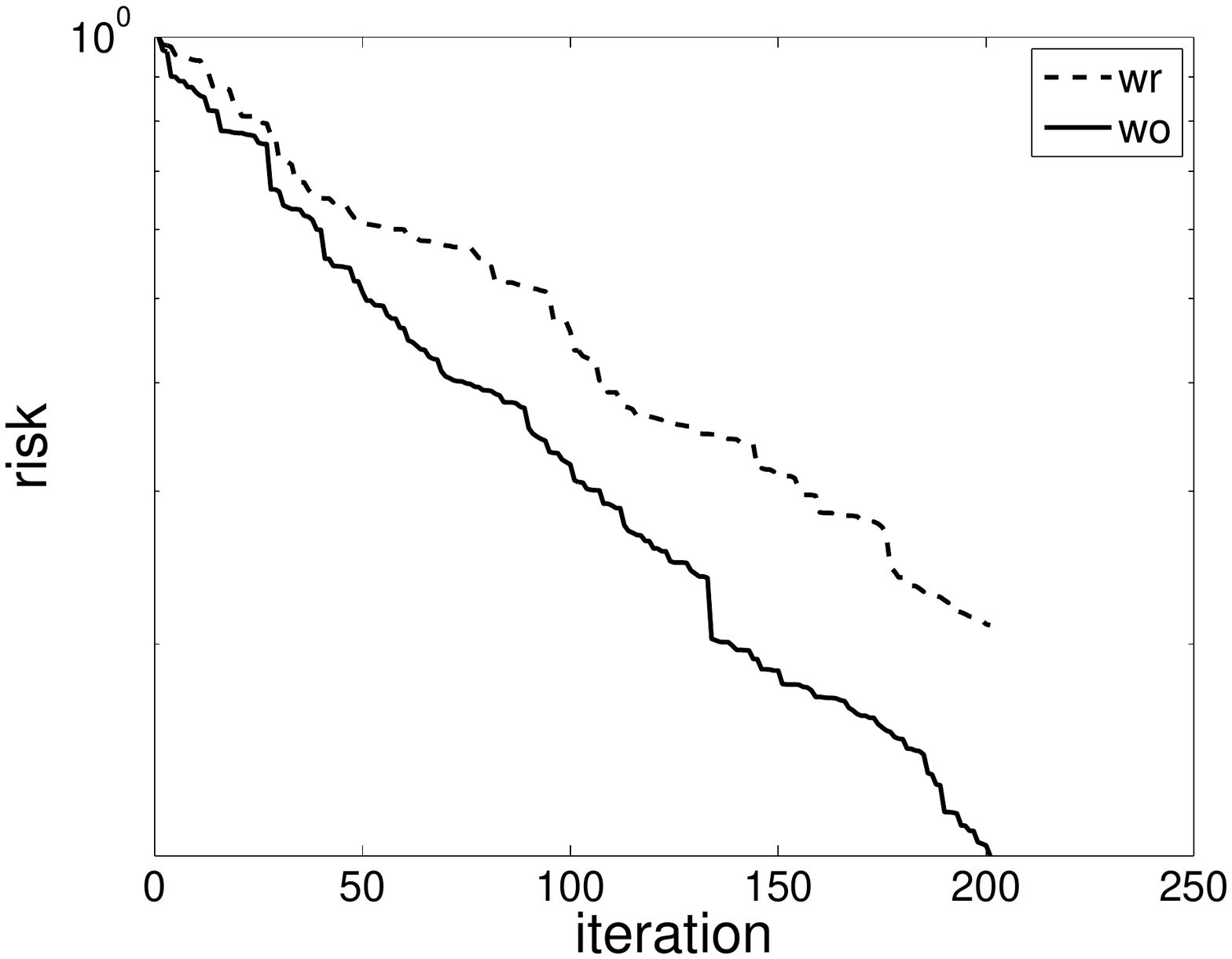}
\end{tabular}
\caption{Comparison of with- and without-replacement sampling on the examples from Section~\ref{sec:examples}.  In the first column, $\mtx{\Phi}$ is $100 \times 105$. In the second column, it is $100 \times 200$.  The first row is the randomized Kaczmarz algorithm with $\mtx{\Phi}$ being the harmonic frame defined in~\eq{even-d-frames}.  The second row is running incremental gradient from Section~\ref{sec:igm} with $\rho=0.01$ where $\vct{a}_i$ is also from the harmonic frame of~\eq{even-d-frames}.  The final row is running Kaczmarz again, this time with vectors generated uniformly from Haar measure on the sphere in $\R^d$.\label{fig:compare}}
\end{figure}

\section{Discussion and open problems}

While i.i.d. matrices are of significant importance in machine learning, the major piece of open work is proving Conjecture~\ref{conj:ncamgm} for all positive semidefinite matrix tuples or finding a counterexample for either of the assertions.  As demonstrated by the harmonic frames example, symmetrized products of deterministic matrices become quickly tedious and difficult to study.  Some sort of combinatorial structure might need to be exploited for a short proof to arise in general.  It remains to be seen if this sort of combinatorics employed in proving Theorem~\ref{thm:harmonic-frames} generalizes beyond this particular example, but we expect these techniques will be useful in future studies of Conjecture~\ref{conj:ncamgm}.  In particular, it would be interesting to see if we could reduce the proof of the conjecture to verifying the conjecture on frames that arise as the orbit of the representation of some finite group.  These frames have been fully classified by~\citet{Hassibi01}, and would reduce Conjecture~\ref{conj:ncamgm} to a finite list of cases.
    
\paragraph{A further conjecture and its consequences} The generalization of~\eq{bhatia1} to $n\geq 3$ asserts a stronger version of~\eq{bias-conj}
\begin{equation}\label{eq:variance-conj}
	\E_{\wo} \left[ \left\| \prod_{j=1}^k \mtx{A}_{i_j}\right\|^2\right] \leq \left\| \tfrac{1}{n} \sum_{i=1}^n \mtx{A}_i \right\|^{2k}\,.
\end{equation}
Certainly, ~\eq{bias-conj} follows from~\eq{variance-conj} by Jensen's inequality the triangle inequality.  Moreover, using the same argument we used in proving Proposition~\ref{prop:two-matrices},~\eq{weak-variance-conj} also follows from~\eq{variance-conj}.  When $n \geq 3$, is it the case that~\eq{variance-conj} holds?  It could be that for general matrices, it is easier to analyze~\eq{variance-conj} rather than~\eq{weak-variance-conj} because the right hand side is in terms of the arithmetic mean, rather than the more complicated quadratic matrix products in~\eq{weak-variance-conj}.

\paragraph{Effect of biased orderings.}  Another possible technique for solving incremental algorithms is to choose the best ordering of the increments to reach the cost function.  In terms of matrices, can we find the ordering of the matrices $\mtx{A}_i$ that achieves the minimum norm.  At first glance this seems daunting.  Suppose $\mtx{A}_i=\vct{a}_i\vct{a}_i^T$ where the $\vct{a}_i$ are all unit vectors.  Then for $\sigma \in S_n$
\[
	\left\| \prod_{i=1}^n \mtx{A}_{\sigma(i)} \right\| = \prod_{i=1}^{n-1} | \langle \vct{a}_{\sigma(i)}, \vct{a}_{\sigma(i+1)}\rangle | 
\]
minimizing this expression with respect to $\sigma$ amounts to finding the minimum weight traveling salesman path in the graph with weights $\log | \langle \vct{a}_i, \vct{a}_j \rangle |$.  Are there simple heuristics that can get within a small constant of the optimal tour for these graphs?  How do greedy heuristics fare?  This sort of approach was explored with some success for the Kaczmarz method by~\citet{Needell11}.


\paragraph{Nonlinear extensions}

Extending even the random results in this paper to nonlinear algorithms such as the general incremental gradient descent algorithm or randomized coordinate descent would require modifying the analyses used here.  However, it would be of interest to see which of the randomization tools employed in this work can be extended to the nonlinear case.   For example, if we assume that the cost function~\eq{glob-prob} has summands which are sampled i.i.d., can we use similar tools (e.g., Jensen's inequality, moment bounds) to show that without-replacement sampling works even in the nonlinear case?

\section*{Acknowledgements}

The authors would like to thank Dimitri Bertsekas, Aram Harrow, Pablo Parrilo for many helpful conversations and suggestions.  BR is generously supported by ONR award N00014-11-1-0723 and NSF award CCF-1139953.  CR is generously supported by the Air Force Research Laboratory (AFRL) under prime contract no. FA8750-09-C-0181, the NSF CAREER award under IIS-1054009, ONR award N000141210041, and gifts or research awards from Google, Greenplum, Johnson Controls, Inc., LogicBlox, and Oracle.  Any opinions, findings, and conclusion or recommendations expressed in this work are those of the authors and do not necessarily reflect the views of any of the above sponsors including DARPA, AFRL, or the US government.

{\small
\bibliographystyle{abbrvnat}
\bibliography{/Users/brecht/Documents/LaTeX/brecht}

\begin{thebibliography}{40}
\providecommand{\natexlab}[1]{#1}
\providecommand{\url}[1]{\texttt{#1}}
\expandafter\ifx\csname urlstyle\endcsname\relax
  \providecommand{\doi}[1]{doi: #1}\else
  \providecommand{\doi}{doi: \begingroup \urlstyle{rm}\Url}\fi

\bibitem[Ando et~al.(2004)Ando, Li, and Mathias]{Ando04}
T.~Ando, C.-K. Li, and R.~Mathias.
\newblock Geometric means.
\newblock \emph{Linear Algebra and Its Applications}, 385:\penalty0 305--334,
  2004.

\bibitem[Anstreicher and Wolsey(2000)]{Antstreicher00}
K.~M. Anstreicher and L.~A. Wolsey.
\newblock Two ``well-known'' properties of subgradient optimization.
\newblock \emph{Mathematical Programming (Series B)}, 120\penalty0
  (1):\penalty0 213--220, 2000.

\bibitem[Bai(1999)]{Bai99}
Z.~D. Bai.
\newblock Methodologies in spectral analysis of large dimensional random
  matrices.
\newblock \emph{Statistica Sinica}, 9\penalty0 (3):\penalty0 611--661, 1999.

\bibitem[Bertsekas(1999)]{BertsekasNLP}
D.~P. Bertsekas.
\newblock \emph{Nonlinear Programming}.
\newblock Athena Scientific, Belmont, MA, 2nd edition, 1999.

\bibitem[Bertsekas(2011)]{Bertsekas11}
D.~P. Bertsekas.
\newblock Incremental proximal methods for large scale convex optimization.
\newblock \emph{Mathematical Programming}, 129:\penalty0 163--195, 2011.

\bibitem[Bertsekas(2012)]{BertsekasIncrementalSurvey}
D.~P. Bertsekas.
\newblock Incremental gradient, subgradient, and proximal methods for convex
  optimization: A survey.
\newblock In S.~Sra, S.~Nowozin, and S.~J. Wright, editors, \emph{Optimization
  in Machine Learning}, pages 85--119. MIT Press, 2012.

\bibitem[Bhatia and Holbrook(2006)]{Bhatia06}
R.~Bhatia and J.~Holbrook.
\newblock Riemannian geometry and matrix geometric means.
\newblock \emph{Linear Algebra and Its Applications}, 413\penalty0
  (2-3):\penalty0 594--618, 2006.

\bibitem[Bhatia and Kittaneh(1990)]{Bhatia90}
R.~Bhatia and F.~Kittaneh.
\newblock On the singular values of a product of operators.
\newblock \emph{SIAM Journal on Matrix Analysis and Applications}, 11:\penalty0
  272, 1990.

\bibitem[Bhatia and Kittaneh(2000)]{Bhatia00}
R.~Bhatia and F.~Kittaneh.
\newblock Notes on matrix arithmetic-geometric mean inequalities.
\newblock \emph{Linear Algebra and Its Applications}, 308\penalty0
  (1-3):\penalty0 203--211, 2000.

\bibitem[Bhatia and Kittaneh(2008)]{Bhatia08}
R.~Bhatia and F.~Kittaneh.
\newblock The matrix arithmetic-geometric mean inequality revisited.
\newblock \emph{Linear Algebra and Its Applications}, 428\penalty0
  (8-9):\penalty0 2177--2191, 2008.

\bibitem[Bonnabel and Sepulchre(2009)]{Bonnabel09}
S.~Bonnabel and R.~Sepulchre.
\newblock Geometric distance and mean for positive semi-definite matrices of
  fixed rank.
\newblock \emph{{SIAM} Journal on Matrix Analysis and Applications},
  31\penalty0 (3):\penalty0 1055--1077, 2009.

\bibitem[Bottou(1998)]{Bottou98}
L.~Bottou.
\newblock Online algorithms and stochastic approximations.
\newblock In D.~Saad, editor, \emph{Online Learning and Neural Networks}.
  Cambridge University Press, Cambridge, UK, 1998.

\bibitem[Bottou(2009)]{Bottou09}
L.~Bottou.
\newblock Curiously fast convergence of some stochastic gradient descent
  algorithms.
\newblock In \emph{Proceedings of the symposium on learning and data science},
  Paris, April 2009.
\newblock Available at
  \url{http://www.ceremade.dauphine.fr/~rahal/SLDS/Learning
  III/bottou/sgdproblem.pdf}.

\bibitem[Cafuta et~al.(2011)Cafuta, Klep, and Povh]{NCSOSTools}
K.~Cafuta, I.~Klep, and J.~Povh.
\newblock {NCSOStools}: a computer algebra system for symbolic and numerical
  computation with noncommutative polynomials.
\newblock \emph{Optimization Methods and Software}, 26\penalty0 (3):\penalty0
  363--380, 2011.
\newblock Software available at \url{http://ncsostools.fis.unm.si/download}.

\bibitem[Cormen et~al.(2009)Cormen, Leiserson, Rivest, and Stein]{CLRBook}
T.~H. Cormen, C.~E. Leiserson, R.~L. Rivest, and C.~Stein.
\newblock \emph{Introduction to Algorithms}.
\newblock MIT Press, 3rd edition, 2009.

\bibitem[Eldar and Needell(2011)]{Needell11}
Y.~C. Eldar and D.~Needell.
\newblock Acceleration of randomized {K}aczmarz method via the
  {J}ohnson-{L}indenstrauss {L}emma.
\newblock \emph{Numerical Algorithms}, 58\penalty0 (2):\penalty0 163--177,
  2011.

\bibitem[Feng et~al.(2012)Feng, Kumar, Recht, and R\'{e}]{RechtReSIGMOD12}
X.~Feng, A.~Kumar, B.~Recht, and C.~R\'{e}.
\newblock Towards a unified architecture for in-rdbms analytics.
\newblock In \emph{Proceedings of SIGMOD}, 2012.

\bibitem[Goyal et~al.(2001)Goyal, Kovacevic, and Kelner]{Goyal01}
V.~Goyal, J.~Kovacevic, and J.~Kelner.
\newblock {Quantized frame expansions with erasures}.
\newblock \emph{Applied and Computational Harmonic Analysis}, 10\penalty0
  (3):\penalty0 203--233, 2001.

\bibitem[Hardy et~al.(1952)Hardy, Littlewood, and
  P\'{o}lya]{HardyLittlewoodPolyaBook}
G.~H. Hardy, J.~E. Littlewood, and G.~P\'{o}lya.
\newblock \emph{Inequalities}.
\newblock Cambridge University Press, 2nd edition, 1952.

\bibitem[Harrow()]{AramGossip}
A.~Harrow.
\newblock Private Communication.

\bibitem[Hassibi et~al.(2001)Hassibi, Hochwald, Shokrollahi, and
  Sweldens]{Hassibi01}
B.~Hassibi, B.~Hochwald, A.~Shokrollahi, and W.~Sweldens.
\newblock {Representation theory for high-rate multiple-antenna code design}.
\newblock \emph{IEEE Trans. Inform. Theory}, 47\penalty0 (6):\penalty0
  2335--2367, 2001.

\bibitem[Herman(1980)]{Herman80}
G.~T. Herman.
\newblock \emph{Fundamentals of Computer Tomography: Image reconstruction from
  projections}.
\newblock Springer, 1st edition, 1980.

\bibitem[Hounsfield(1973)]{Hounsfield73}
G.~N. Hounsfield.
\newblock Computerized transverse axial scanning (tomography): Part {I}.
  description of the system.
\newblock \emph{British Journal of Radiology}, 46:\penalty0 1016--1022, 1973.

\bibitem[Kaczmarz(1937)]{Kaczmarz37}
S.~Kaczmarz.
\newblock Angen\"aherte aufl\"osung von systemen linearer gleichungen.
\newblock \emph{International Bulletin of the Polish Academy of Sciences,
  Letters A}, pages 335--357, 1937.

\bibitem[Konvalina(1995)]{Konvalina95}
J.~Konvalina.
\newblock Roots of unity and circular subsets without consecutive elements.
\newblock \emph{The Fibonacci Quarterly}, 33\penalty0 (5):\penalty0 412--415,
  1995.

\bibitem[Leventhal and Lewis(2010)]{Leventhal10}
D.~Leventhal and A.~S. Lewis.
\newblock Randomized methods for linear constraints: Convergence rates and
  conditioning.
\newblock \emph{Mathematics of Operations Research}, 35\penalty0 (3):\penalty0
  641--654, 2010.

\bibitem[Luo(1991)]{Luo91}
Z.-Q. Luo.
\newblock On the convergence of the {LMS} algorithm with adaptive learning rate
  for linear feedforward networks.
\newblock \emph{Neural Computation}, 3\penalty0 (2):\penalty0 226--245, 1991.

\bibitem[Natterer(1986)]{Natterer86}
F.~Natterer.
\newblock \emph{The mathematics of computerized tomography}.
\newblock Wiley, 1986.

\bibitem[Nedic and Bertsekas(2000)]{Nedic00}
A.~Nedic and D.~P. Bertsekas.
\newblock Convergence rate of incremental subgradient algorithms.
\newblock In S.~Uryasev and P.~M. Pardalos, editors, \emph{Stochastic
  Optimization: Algorithms and Applications}, pages 263--304. Kluwer Academic
  Publishers, 2000.

\bibitem[Nemirovski and Yudin(1983)]{NemirovskiYudinBook}
A.~Nemirovski and D.~Yudin.
\newblock \emph{Problem complexity and method efficiency in optimization}.
\newblock Wiley, New York, 1983.

\bibitem[Nemirovski et~al.(2009)Nemirovski, Juditsky, Lan, and
  Shapiro]{Nemirovski09}
A.~Nemirovski, A.~Juditsky, G.~Lan, and A.~Shapiro.
\newblock Robust stochastic approximation approach to stochastic programming.
\newblock \emph{SIAM Journal on Optimization}, 19\penalty0 (4):\penalty0
  1574--1609, 2009.

\bibitem[Nesterov(2010)]{Nesterov10}
Y.~Nesterov.
\newblock Efficiency of coordinate descent methods on huge-scale optimization
  problems.
\newblock CORE Discussion Paper 2010/2, 2010.

\bibitem[Nesterov(2011)]{Nesterov11}
Y.~Nesterov.
\newblock Random gradient-free minimization of convex functions.
\newblock {CORE} Discussion Paper 2011/16, 2011.

\bibitem[Recht and R\'{e}(2011)]{RechtRe11}
B.~Recht and C.~R\'{e}.
\newblock Parallel stochastic gradient algorithms for large-scale matrix
  completion.
\newblock Submitted to \emph{Mathematical Programming Computation}. Preprint
  available at
  \url{http://www.optimization-online.org/DB_HTML/2011/04/3012.html}, 2011.

\bibitem[Sezan and Stark(1987)]{Sezan87}
K.~M. Sezan and H.~Stark.
\newblock Applications of convex projection theory to image recovery in
  tomography and related areas.
\newblock In H.~Stark, editor, \emph{Image Recovery: Theory and Application},
  pages 415--462. Academic Press, 1987.

\bibitem[Shalev-Shwartz and Srebro(2008)]{Shalev-Shwartz08}
S.~Shalev-Shwartz and N.~Srebro.
\newblock {SVM} {O}ptimization: {I}nverse dependence on training set size.
\newblock In \emph{Proceedings of the 25th Internation Conference on Machine
  Learning (ICML)}, 2008.

\bibitem[Strohmer and Vershynin(2009)]{Strohmer09}
T.~Strohmer and R.~Vershynin.
\newblock A randomized {K}aczmarz algorithm with exponential convergence.
\newblock \emph{Journal of Fourier Analysis and Applications}, 15\penalty0
  (2):\penalty0 262--278, 2009.

\bibitem[Tropp(2011)]{Tropp11}
J.~A. Tropp.
\newblock User-friendly tail bounds for sums of random matrices.
\newblock \emph{Foundations of Computational Mathematics}, 2011.
\newblock Springer Online First \url{doi:10.1007/s10208-011-9099-z}.

\bibitem[Tseng(1998)]{Tseng98}
P.~Tseng.
\newblock An incremental gradient(-projection) method with momentum term and
  adaptive stepsize rule.
\newblock \emph{{SIAM} Joural on Optimization}, 8\penalty0 (2):\penalty0
  506--531, 1998.

\bibitem[Widrow and Hoff(1960)]{WidrowHoffLMS}
B.~Widrow and M.~E. Hoff.
\newblock Adaptive switching circuits.
\newblock In \emph{Institute of Radio Engineers, Western Electronic Show and
  Convention, Convention Record}, pages 96--104, 1960.

\end{thebibliography}
}

\appendix

\section{Additional calculations for random matrices}\label{sec:gaussian-calc}

For the special case of Wishart matrices, we can show that the gap between the norm of the arithmetic and geometric means in~\ref{eq:weak-variance-conj} is quite large.

\begin{lemma}
For $i=1,\dots,k$, we have
\[ \E_{\cal G}[ \mtx{V}( i,\dots,i) ] \geq \|x\|^2r 2^{-2k} \frac{4k!}{2k!} \sigma^{4k} \]
\end{lemma}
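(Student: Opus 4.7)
The plan is to identify $\mtx{V}(i,\ldots,i)$ with $\mtx{A}_i^{2k}$ --- the palindromic product $\mtx{A}_{i_k}\cdots\mtx{A}_{i_1}^2\cdots\mtx{A}_{i_k}$ collapses to $\mtx{A}_i^{2k}$ when every argument equals $i$ --- and to exploit the orthogonal invariance of the Gaussian law of $\mtx{Z}_i$ to conclude $\E_{\cal G}[\mtx{A}_i^{2k}] = c_k\,\mtx{I}_d$ for some scalar $c_k$. The lemma therefore reduces to the scalar bound $c_k \geq r \cdot 2^{-2k}(4k)!/(2k)! \cdot \sigma^{4k}$, since then $x^T \E_{\cal G}[\mtx{V}(i,\ldots,i)] x = c_k \|x\|^2$ for every $x$.

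First I would expand a diagonal entry using $\mtx{A} = \mtx{Z}\mtx{Z}^T$ into its four-index form,
\[
(\mtx{A}^{2k})_{11} = \sum_{c_1,\ldots,c_{2k-1}\in[d]}\ \sum_{a_1,\ldots,a_{2k}\in[r]}\, Z_{1,a_1}Z_{c_1,a_1}Z_{c_1,a_2}Z_{c_2,a_2}\cdots Z_{c_{2k-1},a_{2k}}Z_{1,a_{2k}},
\]
and apply Isserlis' theorem to the expectation of each $4k$-fold Gaussian product. Every perfect matching contributes $\sigma^{4k}$ times a product of Kronecker $\delta$'s, hence is nonnegative, and summing over $a_1,\ldots,a_{2k}$ preserves nonnegativity. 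Consequently the contribution of every individual tuple $(c_1,\ldots,c_{2k-1})$ to $\E[(\mtx{A}^{2k})_{11}]$ is nonnegative, and one obtains a valid lower bound by retaining only one specific configuration.

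I would then choose the \emph{monochromatic} path $c_1=\cdots=c_{2k-1}=1$. Under this choice the summand collapses to $\prod_{j=1}^{2k} Z_{1,a_j}^2$, and the $a$-sum factorizes as $\bigl(\sum_{a=1}^r Z_{1,a}^2\bigr)^{2k}$. Since $\sum_{a=1}^r Z_{1,a}^2 \sim \sigma^2\,\chi^2(r)$, the standard chi-squared moment formula gives
\[
\E\Bigl[\Bigl(\sum_{a=1}^r Z_{1,a}^2\Bigr)^{2k}\Bigr] = \sigma^{4k}\prod_{j=0}^{2k-1}(r+2j).
\]
For $r\geq 1$ we have $r+2j\geq 2j+1$ for every $j\geq 1$, so peeling off the leading factor $r$ yields
\[
\prod_{j=0}^{2k-1}(r+2j)\;\geq\; r\cdot \prod_{j=1}^{2k-1}(2j+1)\;=\;r\cdot (4k-1)!!\;=\;r\cdot 2^{-2k}\frac{(4k)!}{(2k)!},
\]
which, together with the factor of $\sigma^{4k}$, is precisely the advertised bound on $c_k$.

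The only delicate step is the sign argument that licenses dropping every $c$-configuration except the monochromatic one; it rests entirely on the fact that Wick contractions of centered Gaussians are products of variances and Kronecker $\delta$'s, hence uniformly nonnegative. Everything else is bookkeeping, and no cancellation among pairings needs to be tracked because we are only seeking a lower bound --- this is what makes the strategy far simpler than the exact calculation of $c_k$ (which would require summing over all $(4k-1)!!$ pairings weighted by powers of $d$ and $r$). The main thing to verify carefully is that the monochromatic restriction indeed supplies the $r\cdot(4k-1)!!\,\sigma^{4k}$ scaling matching the lemma's normalization, which the chi-squared calculation does cleanly.
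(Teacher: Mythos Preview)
Your argument is correct and follows essentially the same route as the paper's: both restrict the intermediate index path to the monochromatic choice (the paper phrases this as ``selecting out only the self loops''), thereby reducing the problem to bounding $\E[(A^{(i)}_{uu})^{2k}]$ from below. The only substantive difference is in this final step: the paper drops all multinomial cross-terms in $(\sum_{l} g_{il}^2)^{2k}$ and keeps just the single-index contributions $\sum_{l=1}^{r}\E[g_{il}^{4k}]=r\sigma^{4k}(4k-1)!!$, whereas you compute the exact $\chi^2(r)$ moment $\sigma^{4k}\prod_{j=0}^{2k-1}(r+2j)$ and then peel off the leading factor $r$ to reach the same bound. Your Isserlis justification for the nonnegativity of each $c$-configuration is also more explicit than the paper's one-line ``all the terms are positive,'' but otherwise the two arguments coincide.
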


\begin{proof}
\begin{align*}
\E[ \mtx{V}(xi,\dots,i) ] & \geq 
\sum_{u=1}^{d} x_{u}^2 \E_{\cal G}[A^{(i)}_{u,u}A^{(i)}_{u,u} \prod_{j=2}^{k-1} A^{(i)}_{u,u} A^{(i)}_{u,u}] \\
& = \|x\|^2 \E[(A^{(i)}_{u,u})^{2k}] 
\end{align*}
The first inequality is because all the terms are positive and we are
selecting out only the self loops. The equality just groups terms. The
following lower bound completes the proof.
\begin{align*}
\E[ (A_{uu}^{i})^{2k} ]  
= \E\left[ \sum_{l_1,\dots,l_r} {2k \choose l_1,\dots,l_r} \prod_{l=1}^{r} g_{il}^2{l_i} \right] 
\geq  \sum_{l=1}^{r} \E[g_{il}^{4k}] = r 2^{-2k} \frac{4k!}{2k!} \sigma^{4k} \\
\end{align*}
\end{proof}

A simple corollary is the following lower bound on the arithmetic mean
\[ \E_{wr}\E[ \mtx{V}( i_1,\dots,i_k) ] \geq k^{-k} \norm{x}^2 r 2^{-2k}
\frac{4k!}{2k!} \sigma^{4k} \]

We examine the following ratio $\rho(r,k,d)$
\[ \rho(k,r,d) = \frac{\E_{wr}\E_{{\cal G}}[ \mtx{G}(x, i_1,\dots,i_k) ] }{\E_{wo}\E_{{\cal G}}[ \mtx{G}(x, i_1,\dots,i_k) ]} \geq r\frac{4k!}{2k!} (4kr(r+d+1))^{-k} \]

For fixed $r,d$, $\rho$ grows exponentially with $k$.
\begin{lemma} For $k,r,d \geq 0$ then
\[ \rho(k,r,d) \geq r e^{\frac{1}{4k(k+1)}}
\left( \frac{16k}{e^2 r (r+d+1)} \right)^{k} \]
\end{lemma}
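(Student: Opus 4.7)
The plan is to reduce the claimed bound to a Stirling-type estimate on the ratio $(4k)!/(2k)!$, and then verify a small constant inequality. From the corollary immediately preceding the lemma we already have
\[
\rho(k,r,d) \;\geq\; r\,\frac{(4k)!}{(2k)!}\,\bigl(4k\,r(r+d+1)\bigr)^{-k},
\]
so after dividing both sides of the target inequality by the common factor $r\bigl(r(r+d+1)\bigr)^{-k}$ the entire claim collapses to the purely combinatorial statement
\[
\frac{(4k)!}{(2k)!} \;\geq\; \exp\!\Bigl(\tfrac{1}{4k(k+1)}\Bigr)\,\frac{(64\,k^{2})^{k}}{e^{2k}}.
\]
I would spend one sentence explaining this reduction and then treat the remaining inequality as a self-contained calculus exercise.

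Next I would apply the standard two-sided Stirling bounds
\[
\sqrt{2\pi n}\,(n/e)^{n}\,e^{1/(12n+1)} \;\leq\; n! \;\leq\; \sqrt{2\pi n}\,(n/e)^{n}\,e^{1/(12n)},
\]
using the lower bound for $(4k)!$ and the upper bound for $(2k)!$. The prefactor ratio evaluates to $\sqrt{2}$, the main exponential part simplifies as $(4k/e)^{4k}/(2k/e)^{2k} = 64^{k}\,k^{2k}\,e^{-2k}$, and the remainders combine to yield
\[
\frac{(4k)!}{(2k)!} \;\geq\; \sqrt{2}\,(64\,k^{2})^{k}\,e^{-2k}\,\exp\!\Bigl(\tfrac{1}{48k+1} - \tfrac{1}{24k}\Bigr).
\]

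Finally, comparing with the reduced target it suffices to establish
\[
\tfrac{1}{2}\log 2 + \tfrac{1}{48k+1} - \tfrac{1}{24k} \;\geq\; \tfrac{1}{4k(k+1)}
\quad\text{for all } k \geq 1.
\]
The constant $\tfrac{1}{2}\log 2 \approx 0.347$ dominates every $1/k$-term on either side: for $k=1$ the left side exceeds $0.32$ while the right side is $1/8 = 0.125$, and for $k \geq 2$ the right side is already below $1/24$ while the left side stays bounded below by $\tfrac{1}{2}\log 2 - \tfrac{1}{24k}$. One either checks $k=1,2$ by hand and then uses monotonicity of $1/(4k(k+1))$ versus the (bounded) negative correction on the left, or one notes that the left side converges to $\tfrac{1}{2}\log 2$ while the right side tends to zero. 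I expect this final verification to be the only delicate step, and it is essentially bookkeeping; there is no genuine obstacle, since once the Stirling envelope is applied the claim becomes an elementary one-variable inequality in $k$.
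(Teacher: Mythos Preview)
Your approach is essentially the same as the paper's: both start from the preceding corollary $\rho(k,r,d)\geq r\,\tfrac{(4k)!}{(2k)!}\,(4kr(r+d+1))^{-k}$ and reduce everything to a Stirling estimate on $(4k)!/(2k)!$. The paper applies its Stirling bounds and simplifies in one line; you do the same but retain the $\sqrt{2}$ prefactor and explicitly verify the residual inequality $\tfrac12\log 2 + \tfrac{1}{48k+1} - \tfrac{1}{24k} \geq \tfrac{1}{4k(k+1)}$, which the paper elides.

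If anything, your version is the more careful one. The paper's written proof contains a few slips: the Stirling envelope is quoted with exponents $e^{1/(2k+1)}$ and $e^{1/(2k)}$ rather than the Robbins constants $e^{1/(12k+1)}$ and $e^{1/(12k)}$, the $\sqrt{2}$ from $\sqrt{8\pi k}/\sqrt{4\pi k}$ is silently dropped, and the sign of the $\tfrac{1}{4k(k+1)}$ correction flips between the penultimate and final displayed lines. Your bookkeeping avoids all of these, and the final one-variable check is exactly the kind of clean-up the paper's argument needs. No genuine gap in your plan.
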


\begin{proof}
We use a very crude lower and upper bound pair that holds for all
$k$~\citep[p.~55]{CLRBook}.

\[ \sqrt{2\pi k} \left(\frac{k}{e}\right)^{k} e^{\frac{1}{2k+1}} \leq k! \leq 
\sqrt{2 \pi k} \left(\frac{k}{e}\right)^{k} e^{1/2k}\]
With this inequality, we can write:
\begin{align*}
\rho(k,r,d) & \geq r \exp\{ k \ln (4k/e)^4 - k \ln (2k/e)^2 - k \ln (4kr(r+d+1)) + \frac{1}{2k} - \frac{1}{2k+1}  \} \\
& = r \exp \left\{ k \ln \frac{4^2 k}{e^2 r(r+d+1)} - \frac{1}{4k(k+1)} \right\} = r \left(\frac{16 k}{e^2 r(r+d+1)}\right)^{k} e^{\frac{1}{4k(k+1)}}
\end{align*}
\end{proof}

\section{Proof that harmonic frames satisfy the noncommutative arithmetic-geometric mean inequality}\label{sec:frames}

\paragraph*{Problem}
Let $S$ be the symmetrized geometric mean of a set of rank $1$, idempotent matrices that are
parametrized by angles $\phi_1,\dots,\phi_n$ (This is slightly more
general than we need for our theorem above). Our goal is to compute
the $2$-norm of $S$:

\[ \max_{v : \norm{v} = 1} v^{T}Sv  = \max_{\phi_v \in [0,2\pi]}
\frac{1}{n!} \sum_{\sigma \in S_n} \cos( \phi_v - \phi_{\sigma(1)}) \times
\prod_{i=1}^{n-1} \cos(\phi_{\sigma(i)} - \phi_{\sigma(i+1)}) \times
\cos(\phi_{\sigma(n)} - \phi_v)  \]

\subsection{Cosine combinatorics}

We will write this function as fourier transform (we pull out the
$2^{n}$ for convenience):
\begin{equation}
2^{n} \max_{v : \norm{v} = 1} v^{T}Sv =  \sum_{k} c_k \cos(k \pi n^{-1}) + \max_{\phi_v}  \sum_{l}d_l \cos(\phi_v + l \pi n^{-1}) 
\label{eq:cos:master}
\end{equation}

To find the $c_k$ and $d_k$, we repeatedly apply the following
identity:

\[ \cos x \cos y = 2^{-1}(\cos (x + y) +  \cos (x - y)) \]

Fix $\phi_1,\dots,\phi_2,\dots,\phi_n,\dots \in [0,2\pi]$. We first
consider a related form, $T_n$, for $n=1, 2,3,\dots,$ defined by the following recurrence
\[ T_1 = 1 \text{ and } T_{n+1} = T_{n} \cos(\psi_{n} - \psi_{n+1}) \]
We compute $T_n$ using the above transformation. But, first, we show
the pattern by example:

\begin{example}

\begin{align*}
T_1 & = 1\\
T_2 & = \cos(\psi_1 - \psi_2)\\
T_3 & = \cos(\psi_1 - \psi_3) + 
\cos(\psi_1 - 2\psi_2 + \psi_3)\\
T_4 & = \cos(\psi_1 - \psi_4) + 
\cos(\psi_1 - 2\psi_3 + \psi_4) + 
\cos(\psi_1 - 2\psi_2 + 2\psi_3 - \psi_4)
\cos(\psi_1 - 2\psi_2 + \psi_4)\\
T_4 & = \cos(\psi_1 - \psi_4) + 
\cos(\psi_1 - 2\psi_2 + 2\psi_3 - \psi_4) + 
\cos(\psi_1 - 2\psi_3 + \psi_4) + 
\cos(\psi_1 - 2\psi_2 + \psi_4)\\
T_5 & = \cos(\psi_1 - \psi_5) + 
\cos(\psi_1 - 2\psi_2 + 2\psi_3 - \psi_5) + 
\cos(\psi_1 - 2\psi_3 + \psi_5) + 
\cos(\psi_1 - 2\psi_2 + \psi_5)\\
& = 
\cos(\psi_1 - 2\psi_4 + \psi_5) + 
\cos(\psi_1 - 2\psi_2 + 2\psi_3 - 2\psi_4 + \psi_5) + 
\cos(\psi_1 - 2\psi_3 + 2 \psi_4 - \psi_5) \\
&\qquad\qquad\qquad + 
\cos(\psi_1 - 2\psi_2 + 2 \psi_4 - \psi_5)\\
\end{align*}

\noindent
In our computation above, $\psi_1 = \phi_v = \psi_n$. And so, after
writing this out, we will get two kinds of terms: even terms
(corresponding to $c_k$) that do not depend on $\psi_v$ (they cancel)
and odd terms that do contain $2 \psi_v$.
\end{example}

We encapsulate this example in a lemma:
\begin{lemma} With $T_n$ as defined above, we have for $n \geq 2$
\[ T_{n} = 2^{-n} \sum_{k} \sum_{\stackrel{i_1,\dots,i_k}{1 < i_1 < i_2 < \dots < i_k < n}} \cos(\psi_1 - 2 \psi_{i_1} + 2 \psi_{i_2} - \dots + (-1)^{k} 2\psi_{i_k} + (-1)^{k+1} \psi_{n}) \]
\end{lemma}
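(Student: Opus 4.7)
The plan is to proceed by induction on $n \geq 2$, using the recurrence $T_{n+1} = T_n \cos(\psi_n - \psi_{n+1})$ that defines the sequence.

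For the base case $n = 2$, there are no admissible indices strictly between $1$ and $2$, so the only contribution comes from the empty subset ($k=0$), producing $\cos(\psi_1 - \psi_2)$, which equals $T_2$ up to the stated normalization.

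For the inductive step, assume the identity holds for $T_n$. I would multiply each summand on the right-hand side of the inductive hypothesis by $\cos(\psi_n - \psi_{n+1})$ and apply the product-to-sum identity $\cos x \cos y = \tfrac{1}{2}(\cos(x+y)+\cos(x-y))$. The crucial observation is that in each such summand, $\psi_n$ appears with coefficient $(-1)^{k+1}$, so in the two resulting terms the $\psi_n$ contribution either \emph{cancels} entirely or \emph{doubles} to $(-1)^{k+1}\cdot 2$. I would then match these two cases against $T_{n+1}$ summands: the cancellation case corresponds to keeping the index set $\{i_1,\ldots,i_k\} \subset \{2,\ldots,n-1\} \subset \{2,\ldots,n\}$ unchanged at $k' = k$, while the doubling case corresponds to the enlarged index set $\{i_1,\ldots,i_k,n\}$ at $k' = k+1$. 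A short parity check on $k$ confirms that the sign on $\psi_{n+1}$ in each new summand is exactly $(-1)^{k'+1}$, as the lemma demands.

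Since every subset of $\{2,\ldots,n\}$ is either a subset of $\{2,\ldots,n-1\}$ or such a subset with $n$ appended, this two-to-one decomposition exhausts all summands of $T_{n+1}$ exactly once. The factor of $\tfrac{1}{2}$ from each application of the product-to-sum identity composes with the inductive normalization to yield the correct constant for $T_{n+1}$. The main obstacle is purely bookkeeping: carefully verifying the $\psi_{n+1}$-sign pattern in both parity cases, and reconciling the exact power of $2$ with the base case (since $T_2$ carries no explicit prefactor while each recurrence step contributes a single $\tfrac{1}{2}$, the claimed $2^{-n}$ should be checked against the derivation and adjusted if the correct exponent is $-(n-2)$ rather than $-n$).
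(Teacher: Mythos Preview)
Your proposal is correct and follows essentially the same approach as the paper: both argue by induction on $n$, multiply the inductive expression for $T_n$ by $\cos(\psi_n-\psi_{n+1})$, apply the product-to-sum identity, and split each resulting pair according to whether the $\psi_n$ contribution cancels or doubles, matching these to subsets of $\{2,\dots,n\}$ that omit or include $n$. Your closing remark about the prefactor is also well taken: the paper's own worked examples (e.g.\ $T_3=\cos(\psi_1-\psi_3)+\cos(\psi_1-2\psi_2+\psi_3)$) confirm that the correct exponent is $-(n-2)$ rather than $-n$, so the lemma as stated carries a harmless normalization slip.
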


\begin{proof}
By induction, we have:
\begin{align*} 
T_{n+1} & = 2^{-n}\sum_{k} \sum_{\stackrel{i_1,\dots,i_k}{1 < i_1 < i_2 < \dots < i_k < n}} \cos(\psi_1 - 2 \psi_{i_1} + 2 \psi_{i_2} - \dots + (-1)^{k} \psi_{i_k} + (-1)^{k+1} \psi_{n})\cos(\psi_{n} - \psi_{n+1})\\
& = 2^{-(n+1)} \sum_{k} \sum_{\stackrel{i_1,\dots,i_k}{1 < i_1 < i_2 < \dots < i_k < n}} 
\cos(\psi_1 - 2 \psi_{i_1} + 2 \psi_{i_2} - \dots + (-1)^{k} \psi_{i_k} + (-1)^{k+2} \psi_{n+1})\\
 + & \cos(\psi_1 - 2 \psi_{i_1} + 2 \psi_{i_2} - \dots + (-1)^{k} \psi_{i_k} + 2(-1)^{k+1} \psi_{n} + (-1)^{k+2} \psi_{n+1})\\
& = 2^{-(n+1)} \sum_{k} \sum_{\stackrel{i_1,\dots,i_k}{1 < i_1 < i_2 < \dots < i_k < n+1}} 
\cos(\psi_1 - 2 \psi_{i_1} + 2 \psi_{i_2} - \dots + (-1)^{k} \psi_{i_k} + (-1)^{k+2} \psi_{n+1})
\end{align*}
\end{proof}

Fix an $n$. We now count a symmetrized version of $T_{n}$ defined as follows: For
$\sigma \in S_{n}$:
\[ S_{n} = \frac{1}{n!} \sum_{\sigma \in S_{n}} \prod_{i=1}^{n-1} \cos(\sigma(i) - \sigma(i+1)) \]

We now show that $S_{n}$ can be written in a form that removes the
permtuation. We also assume some structure here that mimics our
product above, namely that $\phi_{1} = \phi_{n}$.

\begin{lemma} Let $\phi_1,\dots,\phi_{n} \in [0,2\pi]$ such that $\phi_1 = \phi_{n}$. Then,
\begin{align*}
S_{n} & = \sum_{X,Y \subseteq [n] : |X| = |Y| \text{ or } |X| = |Y| + 1} {|X| + |Y| \choose |Y|}^{-1} \cos\left(2 \left(\sum_{i \in X} \phi_{i} - \sum_{j \in Y} \phi_{j}\right) \right)
\end{align*}
\end{lemma}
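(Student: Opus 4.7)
The plan is to compute $S_n$ by substituting the closed form for $T_n$ from the previous lemma into each permuted product and then collecting the resulting cosines. Writing $S_n = \frac{1}{n!}\sum_{\sigma \in S_n} T_n[\sigma]$ with $\psi_j = \phi_{\sigma(j)}$ yields a triple sum---over $k$, over subsets $\{i_1 < \cdots < i_k\} \subset \{2,\ldots,n-1\}$, and over $\sigma \in S_n$---of cosines of the form $\cos\bigl(\phi_{\sigma(1)} - 2\phi_{\sigma(i_1)} + 2\phi_{\sigma(i_2)} - \cdots + (-1)^{k+1}\phi_{\sigma(n)}\bigr)$. Interchanging the summations moves the permutation average to the innermost layer, which is where essentially all the combinatorics happens.

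The key observation is that for fixed $k$ and fixed $\{i_1,\ldots,i_k\}$, the cosine argument depends on $\sigma$ only through its action on the $k+2$ positions $\{1, i_1, \ldots, i_k, n\}$. A standard stabilizer count therefore replaces $\frac{1}{n!}\sum_{\sigma \in S_n}$ by $\frac{(n-k-2)!}{n!}$ times a sum over ordered $(k+2)$-tuples of distinct indices in $[n]$. Moreover, the contribution is determined by $\{i_1,\ldots,i_k\}$ only through its cardinality, so summing over the $\binom{n-2}{k}$ admissible subsets contributes an identical factor. The surviving overall multiplier is $\frac{\binom{n-2}{k}(n-k-2)!}{n!} = \frac{1}{k!\,n(n-1)}$, together with a power of $2$ carried over from the $T_n$ formula.

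Next I would group the ordered tuples by the pair $(X,Y)$, where $X \subseteq [n]$ collects the indices appearing at positive-sign positions of the cosine argument and $Y$ collects those at negative-sign positions. The alternating-sign structure forces $|X|=|Y|$ when $k$ is even and $|X|=|Y|+1$ when $k$ is odd, which is precisely the range of summation on the right-hand side. For each pair with $|X|=s$ and $|Y|=t$, there are $s!\,t!$ ordered $(k+2)$-tuples that realize the same unordered partition into positive and negative positions. Combining this count with the normalization $\frac{1}{k!\,n(n-1)}$ from the previous step and the factor $\binom{n-2}{k}$ should leave exactly the reciprocal binomial coefficient $\binom{s+t}{t}^{-1}=\frac{s!\,t!}{(s+t)!}$ in front of each cosine, matching the target formula.

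The main obstacle is reconciling the endpoint coefficients $\pm 1$ appearing in the $T_n$ formula with the uniform coefficient $2$ inside the cosines on the right-hand side. This is where the hypothesis $\phi_1 = \phi_n$ becomes essential: since the labels $1$ and $n$ carry the same angle, one can pair each ordered tuple $(j_1,\ldots,j_{k+2})$ with a companion obtained by exchanging an endpoint label between the two copies of that angle, effectively doubling the endpoint coefficient after the ordered-tuple sum. Carrying out this bookkeeping while preserving the distinctness required inside each tuple, and then merging the endpoint-doubled arguments with the $\pm 2$ interior ones, is the technically delicate step. A fallback strategy, should the direct combinatorial argument prove unwieldy, is induction on $n$, using the binomial identity $\binom{s+t+1}{t} = \binom{s+t}{t} + \binom{s+t}{t-1}$ to mirror the case analysis for where a newly introduced index lands in $X$, in $Y$, or at an endpoint.
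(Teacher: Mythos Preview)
Your plan and the paper's proof share the same core idea: group the expanded cosine terms by the pair $(X,Y)$ of indices carrying positive and negative sign, and count how many $(\sigma,\text{index-subset})$ pairs land on each $(X,Y)$. The paper does this in a single stroke rather than via your stabilizer decomposition: for a fixed $(X,Y)$ with $m=|X|+|Y|$, it chooses the $m$ slots in the length-$n$ sequence that will carry elements of $X\cup Y$ ($\binom{n}{m}$ ways), fills the alternating positive/negative slots with the elements of $X$ and $Y$ in any internal order ($|X|!\,|Y|!$ ways), and fills the remaining $n-m$ slots arbitrarily ($(n-m)!$ ways). The product is $n!\binom{m}{|Y|}^{-1}$, and dividing by $n!$ gives the claim.

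Your more elaborate bookkeeping contains an arithmetic slip that prevents the answer from coming out. You place the two endpoint positions inside $X\cup Y$, so that $s+t=k+2$, and simultaneously treat those endpoints as fixed in the stabilizer count (giving the factor $(n-k-2)!$) and as absent from the index-subset count (giving $\binom{n-2}{k}$). With that parametrisation your coefficient is
\[
\frac{\binom{n-2}{k}(n-k-2)!}{n!}\cdot s!\,t! \;=\; \frac{s!\,t!}{(s+t-2)!\,n(n-1)},
\]
which equals $\binom{s+t}{t}^{-1}$ only when $s+t=n$. So the step where you assert ``should leave exactly the reciprocal binomial coefficient'' fails.

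The endpoint-swapping fix you propose does not rescue it: the hypothesis $\phi_1=\phi_n$ tells you two particular angles coincide, but in your sum over ordered $(k+2)$-tuples the endpoint labels $j_0,j_{k+1}$ range over all of $[n]$, and for generic pairs there is nothing to swap. The cleaner route is the paper's: let $m=|X|+|Y|$ be the number of slots carrying a nonzero sign (with no special treatment of the first and last positions), choose those $m$ slots from all $n$, and count orderings. If you redo your own stabilizer computation with $m$ positions chosen from $n$ rather than $k$ from $n-2$, you get $\binom{n}{m}\frac{(n-m)!}{n!}\cdot|X|!\,|Y|!=\binom{m}{|Y|}^{-1}$ immediately, and the endpoint issue never arises.
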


\begin{proof}
To see this formula, Consider a pair of sets $X,Y \subseteq [n]$. In
how many permutations $\sigma \in S_{n}$ does $(X,Y)$ contribute a
term? We need to choose $|X| + |Y|$ positions for these terms to
appear out of $n$ possible places in the order. Thus there are ${n
  \choose |X| + |Y|}$ permutations to choose the slots for $(X,Y)$. An
$(|X|,|Y|)$ pair only appears in a permutation in $\sigma$ if the
elements of $X$ and $Y$ can be alternated starting with $X$. This
implies that $|X| = 2|Y| + z_i$ where $z_{i} \in \set{0,1}$. Moreover,
there are $|X|!|Y|!(n-(|X|+|Y||)!$ permutations that respect this
structure (for any choice of $|X| + |Y|$ slots, any ordering of $X$
and $Y$ and the elements outside can occur). 
\[ {n \choose |X| + |Y|} |X|! |Y|! (n-(|X|+|Y|)! = n! {|X| + |Y| \choose |Y|}^{-1} \]
Pushing the $1/n!$ factor inside completes the proof.
\end{proof}

\subsection{Counting on harmonic, finite groups}

In the case we care about, the $\phi_i$ have more structure: the set
$\{ 2\phi_i\}_{i=1}^{n}$ forms a cyclic group under addition modulo
$2\pi$. Let $n$ denote the number of elements in the frame. Fix
$n$. Let $\zeta$ denote a $n$th root of unity. Define a (harmonic)
generating function $f$

\[  f(\zeta,y,z) = \prod_{i=1}^{n} (1 + \zeta^{i}x + \zeta^{-i}y)  \]
We give a shorthand for its coefficients $q_{k,m}$ and $r_{k,m}$ as follows
\[ q_{k,m} := [\zeta^{k}x^{m}y^{m}] f
 \text{ and }
  r_{k,m} := [\zeta^{k}x^{m+1}y^{m}] f
\]

We observe that $q_{k,m}$ computes the number of sets $(X,Y)$ where
$X,Y \subseteq \mathbb{Z}_n$ such that:
\begin{enumerate}
\item $\sum_{i \in X} i - \sum_{j \in Y} j = k \mod n$ (since we inspect $\zeta^{k}$), 
\item $|X| = |Y| = m$ (since we inpect $x^{m}y^{m}$), 
\end{enumerate}
For $r_{k,m}$ the only change is that $|X| = |Y| + 1$ (since
$x^{m+1}y^{m}$). With this notation, we can express the coefficients
from Eq.~\ref{eq:cos:master}.
\[ c_k = \sum_{m} q_{k,m}{2m \choose m}^{-1} \text{ and } d_k = \sum_{m} r_{k,m}{2m + 1 \choose m}^{-1} \]

We use this representation to prove that the symmetrized geometric mean is rotationally
invariant (i.e., $d_k = 0$ for $k=0,1,\dots,n-1$). First, we show that
all $d_k$ are equal.

\begin{lemma}
Consider a frame of size $n$. For any $m$ and $k,l = 0,\dots,n-1$,
$\sum_{k} d_{k} \cos(\phi_v + 2 \pi k) = 0$. 
\end{lemma}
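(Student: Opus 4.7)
The plan is to exploit the translation invariance of the cyclic group $\mathbb{Z}_n$ at the combinatorial level of the coefficients $r_{k,m}$, and then read off the displayed cancellation from the vanishing of the sum of all $n$th roots of unity. The key step is a bijection showing that $r_{k,m}$ is independent of $k$. Concretely, $r_{k,m}$ counts pairs $(X,Y)$ of subsets of $\mathbb{Z}_n$ with $|X|=m+1$, $|Y|=m$, and $\sum_{i\in X} i - \sum_{j\in Y} j \equiv k \pmod{n}$. For any fixed $t \in \mathbb{Z}_n$, the translation $(X,Y)\mapsto(X+t,Y+t)$, with $X+t:=\{x+t\bmod n : x\in X\}$, is a bijection on pairs of this cardinality profile, under which
\[
\sum_{i\in X+t} i - \sum_{j\in Y+t} j \equiv \sum_{i\in X} i - \sum_{j\in Y} j + t\,(|X|-|Y|) \equiv k+t \pmod{n}.
\]
Restricting to level sets, this yields a bijection between pairs counted by $r_{k,m}$ and pairs counted by $r_{k+t,m}$, so $r_{k,m}$ depends only on $m$.

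It then follows immediately that $d_k = \sum_{m} r_{k,m}\binom{2m+1}{m}^{-1}$ is independent of $k$; write $\bar d$ for its common value. This already establishes the informal claim ``all $d_k$ are equal'' made just before the lemma. To extract the displayed equation, substitute $d_k \equiv \bar d$ into the angular part of~\eq{cos:master} and rewrite the sum in terms of the $n$th root of unity $\zeta$ used in the generating function $f$: the sum collapses to a multiple of $\mathrm{Re}\bigl(e^{i\alpha}\sum_{k=0}^{n-1}\zeta^{k}\bigr)$ for some $\alpha$ depending only on $\phi_v$. Since $\sum_{k=0}^{n-1}\zeta^{k}=0$ for $n\geq 2$, the entire angular contribution vanishes.

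The heart of the argument is the cardinality identity $|X|-|Y|=1$ in the bijection: this is what makes the additive shift $t(|X|-|Y|)=t$ nontrivial and forces $r_{k,m}$ to be constant in $k$. The same move fails verbatim for $q_{k,m}$, where $|X|=|Y|$ produces zero shift and the counts are genuinely $k$-dependent; this asymmetry between the two families is exactly why the $c_k$ terms survive into the subsequent lemma to yield the actual norm. The main obstacle I anticipate is not the symmetry argument itself but a small piece of bookkeeping: one must reconcile the $\pi/n$ angular spacing written in~\eq{cos:master} with the $n$th-root-of-unity parametrization used to define $f$. This follows from $\omega_n=\pi/n$ together with the doubling of angular arguments that appears when the inner products $\cos(\phi_{\sigma(i)}-\phi_{\sigma(i+1)})$ are multiplied out, but it is worth tracking carefully so that the identity $\sum_k\zeta^{k}=0$ is applied at exactly the right place.
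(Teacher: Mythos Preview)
Your argument is correct and is essentially the paper's proof in a different dress. The paper establishes $r_{k,m}=r_{l,m}$ by a generating-function congruence (morally, $f(\zeta,\zeta^{t}x,\zeta^{-t}y)=f(\zeta,x,y)$, which reindexes the product over $\mathbb{Z}_n$); you establish the same identity by the explicit translation bijection $(X,Y)\mapsto(X+t,Y+t)$. These are the same translation-invariance observation, and both conclude by factoring out the common value $\bar d$ and using $\sum_{k=0}^{n-1}\cos(\phi_v+2\pi k/n)=0$. One small point of bookkeeping: in the combinatorial description of $r_{k,m}$ the sets $X$ and $Y$ must be disjoint (each factor of $f$ contributes at most one of $\zeta^{i}x$ or $\zeta^{-i}y$), but your bijection preserves disjointness automatically, so the argument goes through unchanged.
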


\begin{proof} This follow by examining the generating function above.
First observe that we have congruence $f(x,x^{j}y,z) = f(x,y,z)$ for
$j=0,\dots,n-1$ tells us that $[x^{j}yz^{m}]f = [yz^{m}]f$. And, the
congruence that $[x^{j}yz^{m}]f = [x^{j}y^{-1}z^{m}]f$. Combining
these facts, we have that $r_{k,m} = r_{l,m}$. Since this holds for
all $k,l$, we can conclude that $d_{k} = d_{l}$ by summing over
$m$. Finally, since $\sum_{l=0}^{n} \cos(\phi_v + 2 l \pi n^{-1}) = 0$
for any fixed $\phi_v$ we conclude the lemma.
\end{proof}

Since the symmetrized geometric mean does not depend on $\phi_v$, we conclude it must be of
the form $\alpha I$ for some $\alpha$. The remainder of this note is
to compute that $\alpha$.

\subsection{Computing the coefficients}

The argument of this subsection is a generalization of that of~\citet{Konvalina95}.

\begin{lemma}
\[ q_{k,m} = (-1)^{k} {n - k \choose k} \frac{n}{n-k} \]
\end{lemma}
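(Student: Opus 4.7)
The plan is to compute $q_{k,m}=[\zeta^k x^m y^m]\,f(\zeta,x,y)$ by first evaluating $f$ in closed form at an $n$-th root of unity and then recovering the coefficient of $\zeta^k$ by discrete Fourier inversion. The first step is to pull a phase out of each factor, writing $1+\zeta^i x+\zeta^{-i}y=\zeta^{-i}(y+\zeta^i+\zeta^{2i}x)$, so that, after the substitution $u=\zeta^i$, the total product becomes $\zeta^{-n(n+1)/2}\prod_{u^n=1}(y+u+u^2 x)$. The inner product factorizes via the classical identity $\prod_{u^n=1}(u-\alpha)=(-1)^n(\alpha^n-1)$: setting $y+u+u^2x=x(u-\alpha_1)(u-\alpha_2)$ with $\alpha_1\alpha_2=y/x$ and $\alpha_1+\alpha_2=-1/x$ gives $\prod_u(y+u+u^2x)=y^n-x^n(\alpha_1^n+\alpha_2^n)+x^n$.

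Next I would expand $\alpha_1^n+\alpha_2^n$ using the Dickson identity
\[
\alpha_1^n+\alpha_2^n=\sum_{j\ge 0}\tfrac{n}{n-j}\binom{n-j}{j}(-\alpha_1\alpha_2)^j(\alpha_1+\alpha_2)^{n-2j},
\]
substitute the elementary symmetric functions of $\alpha_1,\alpha_2$, and read off the $x^m y^m$ coefficient, which pins down a unique Dickson index $j$ in terms of $m$ and $n$. Reintroducing the phase $\zeta^{-n(n+1)/2}$ and passing to the Fourier side via $q_{k,m}=\tfrac{1}{n}\sum_{j=0}^{n-1}\omega^{-jk}[x^m y^m]\,f(\omega^j,x,y)$ with $\omega=e^{2\pi i/n}$ then produces an explicit expression for $q_{k,m}$ in terms of Dickson coefficients at each divisor of $n$. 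The sign $(-1)^k$ in the target formula should come from combining the phase with the $(-1)^j$ appearing in the Dickson expansion.

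The main obstacle will be the last step, namely handling the non-primitive Fourier modes. When $\gcd(j,n)=d>1$, the substitution $u=\omega^{ij}$ only traverses the $n/d$-th roots of unity (each $d$ times), so the closed form is raised to a $d$-th power and its diagonal coefficient is a convolution rather than a single Dickson term. To collapse the resulting divisor sum to the clean expression $(-1)^k\binom{n-k}{k}\tfrac{n}{n-k}$ I would apply a M\"obius inversion on divisors of $n$, following the combinatorial template of \citet{Konvalina95}. In that template, $\binom{n-k}{k}\tfrac{n}{n-k}$ arises as the count of $k$-subsets of $\mathbb{Z}/n$ no two of whose elements are cyclically consecutive, and an orbit-counting argument under the natural $\mathbb{Z}/n$-action on $(X,Y)$ pairs removes the dependence on $m$. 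I expect the bulk of the technical work to consist in making this orbit-counting step precise and verifying that the ``non-consecutive'' reinterpretation really matches the Dickson coefficient extracted in the second step.
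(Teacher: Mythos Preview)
Your first two steps—pulling out the phase $\zeta^{-i}$ from each factor, factoring the resulting quadratic $y+u+u^2x$ over $\mathbb{C}$, and reassembling via the Dickson/Lucas identity
\[
\alpha_1^n+\alpha_2^n=\sum_{j\ge 0}\tfrac{n}{n-j}\binom{n-j}{j}(-\alpha_1\alpha_2)^j(\alpha_1+\alpha_2)^{n-2j}
\]
—are correct and in fact more direct than what the paper does. The paper sets $F_n(x,y)=\prod_{u\in U_n}(1+ux+u^{-1}y)$, \emph{guesses} the closed form $R_n$, and verifies $F_n(x,-y)=R_n(x,y)$ by checking that $R_n$ vanishes on each of the $n$ lines $y=\zeta+\zeta^2x$ comprising the zero set of $F_n(x,-y)$; the verification uses precisely the generating-function form of the same Dickson identity you invoke. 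So through the Dickson expansion the two arguments are equivalent: yours computes $F_n$ ab initio, the paper confirms a guessed answer (the zero-set matching is the piece borrowed from Konvalina).

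Where your plan goes astray is the proposed DFT/M\"obius tail. The paper never performs a Fourier inversion over $j=0,\dots,n-1$. Because $\zeta$ is taken to be a primitive $n$-th root, the product $\prod_{i=1}^n(1+\zeta^i x+\zeta^{-i}y)$ already ranges once over every $n$-th root of unity, so $f(\zeta,x,y)=F_n(x,y)$ is an honest element of $\mathbb{C}[x,y]$ with no residual $\zeta$-dependence (your prefactor $\zeta^{-n(n+1)/2}$ is $\pm1$). In the paper's admittedly loose notation the two indices are conflated: what is actually extracted, and what the next subsection uses, is the $(xy)^k$-coefficient of $F_n$, which your Dickson expansion hands you immediately. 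If instead you interpret $[\zeta^k]$ literally and DFT over all $j$ (including non-primitive $\omega^j$), the answer genuinely depends on $m$—already for $n=2$ one gets $q_{0,1}=0$, not $1$—so no M\"obius or orbit-counting argument will collapse it to the $m$-free target. Stop after the Dickson expansion and you have the paper's lemma.
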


\begin{proof}
Define $R_n$ as:
\[ R_n(x,y) = x^{n} + (-y)^{n} - \sum_{k} {n - k \choose k} \frac{n}{n-k} (xy)^{k}  \]

Since $f(\zeta,x,y) = f(\zeta^{i},x,y)$ for any integer $n$, $F_n(x,y)
= f(\zeta,x,y)$ is a function of $n$ alone. That is, we can write
\[ F_n(x,y) = \prod_{\zeta \in U_n} (1 + \zeta x + \zeta^{-i} y) \]
Thus, claim boils down to
$F_n(x,-y) = R_n(x,y)$.

We show that the zero sets of $F_n(x,-y)$ and $R_n$ are equal. The
zero set of $F_n(x,-y)$ is the set of lines described by
\[ \setof{(x,y)}{y = \zeta + \zeta^2 x} \text{ for } \zeta \in U_n \]
where $\zeta$ is any $n$-th root of unity.  Substituting $y$ at the
root equation, we get that $xy = x\zeta + \zeta^2x^2$.

Now, we check that the following is zero:
\[ x^{n} + (-y)^{n} -  \sum_{k} {n - k \choose k} \frac{n}{n-k} (\zeta x + \zeta^2 x^2)^k
\]

Here, we use the generating function:
\[ \sum_{k} \frac{n}{n-k}{n-k \choose k} y^{k} = \left(\frac{1-\sqrt{1+4y}}{2}\right)^n + \left(\frac{1+\sqrt{1+4y}}{2}\right)^n\,.\] 
Using this sum, we have:
\begin{align*}
& \sum_{k=0}^{n} {n -k \choose k} \frac{n}{n-k} (\zeta x + \zeta^2x^2)^{k} \\
= &  \left(\frac{1 - (2\zeta x + 1)}{2}\right)^n + \left(\frac{1 + (2\zeta x + 1)}{2}\right)^n \\
= & (\zeta x)^{n}  + (1+\zeta x)^{n}\\
= & x^{n} + (-y)^{n}  
\end{align*}
The first equality follows from $1 + 4 \zeta x + 4 \zeta^2 x^2 =
(2x\zeta + 1)^2$. The second is just algebra. Finaly, we use on each
term that $\zeta^{n} = 1$ and that $\zeta + \zeta^2 x = -y$. This claim holds for all $\zeta$
that are roots of unity, and so the function is identically zero.

To conclude the proof, observe that the zero set described above is
the union of $n$ lines of the form $(1 + \zeta x + \zeta^{-1}
y)$. These lines are unique in $\mathbb{C}$: if $(1 + \zeta x +
\zeta^{-1}y) = (1 + \omega x + \omega^{-1}y)$ then since the $x$
coefficients are the same $\zeta = \omega$ and so they must be the
same. By direct inspection, this $R_n$ can only have these factors
(else the total degree would be higher). Hence, $R_n$ = $Q_n$.
 \end{proof}

\subsection{Finally, to a hypergeometric series}

It is possible to get an explicit formula for $\lambda$ that is
related to $_3F_2$. We consider the following series and show that it
is hypergeometric in $k$:
\[ \sum_k T(n,k) {2k\choose k}^{-1} x^{k} = \sum_{k} v(k) x^{k} \] 
Consider the ratio:
\begin{align*}
\frac{v(k+1)}{v(k)} & =
- \frac{ (n-k-1)!((k+1)!)^2}{k+1!(n-2k-2)! (n-k-1) (2k+2)! } \frac{2k!(n-2k)!k! (n-k)}{ (n-k)! (k!)^2}\\
&= - \frac{(n-2k)(n-2k-1)(k+1)}{(n-k-1)(2k+2)(2k+1)}\\
&= \frac{(k-n/2)(k - n/2+1/2)(k+1)}{(k-n+1))(k+1/2)(k+1)}\\
\end{align*}
And so, this is a hypergeometric:
\[ _2F_3\left[\begin{array}{ccc}
1 & -n/2+1/2 & -n/2\\
1/2 & -n+1 
  \end{array}; 1 \right] = \mathcal{O}(n^{-1}) \]

This completes the proof.

\end{document}